\documentclass[
reqno, 
12pt]{amsart}

\usepackage{amssymb,amsmath, amsthm,latexsym}
\usepackage{a4wide}

\usepackage{hyperref}
\usepackage{color}

\usepackage{tikz}

\theoremstyle{plain}

\newtheorem{lemma}{Lemma}[section]
\newtheorem{theorem}[lemma]{Theorem}
\newtheorem{proposition}[lemma]{Proposition}

\newtheorem{remark}[lemma]{Remark}

\newtheorem{definition}[lemma]{Definition}

\newtheorem{example}[lemma]{Example}

\parskip=\bigskipamount 

\font\rm=cmr12

\def\Z{\mathbb Z}

\def\d{\delta}

\def\t{\times}
\def\o{\otimes}

\def\h{\hookrightarrow}

\def\a{\alpha}

\def\D{\Delta}

\def\s{\sigma}

\parindent=0pt

%Topmatter

\title[Interactions between harmonic analysis and  automorphic forms]
{On interactions between harmonic analysis and the theory of automorphic forms}

\mark{Marko Tadi\'c}

\author{Marko Tadi\'c}

\address{Department of Mathematics, University of Zagreb
\\
Bijeni\v{c}ka 30, 10000 Zagreb,
 Croatia\\
Email: \tt tadic{\char'100}math.hr}

\keywords{non-archimedean local fields, classical groups, irreducible representations, unitary representations, square integrable representations, cuspidal representations, automorphic representations}
\subjclass[2000]
{Primary: 22E50, 22E55, Secondary: 11F70, 11S37}

\thanks{
The   
author was partly supported by 
Croatian Ministry of Science, Education and Sports grant
{\#}037-0372794-2804.}
\date{\today}

%%%%%%%%%%%%

%\doublespacing
%\onehalfspacing
%\setstretch{1.25}

%In order to make a part of the text of your document singlespaced, you can
%put:
%\begin{singlespace}
%at the beginning of the text you want singlespaced, and
%\end{singlespace}

%%%%%%%%%%%%%%%%%%%%%

\begin{document}
  
\begin{abstract}
In this paper we review some connections between harmonic analysis and the modern theory of automorphic forms.  We indicate in some examples how the study of problems of harmonic analysis brings us to the important objects of the theory of automorphic forms, and conversely. We consider classical groups and their unitary, tempered, automorphic and unramified duals. The most important representations in our paper are the isolated points in these duals.
\end{abstract}

\maketitle

\setcounter{tocdepth}{1}

%\begin{centerline}
%{--------- \ \ \ \  Preliminary version \ \ \ \  ---------}
%\end{centerline}

\tableofcontents

\section{Introduction}\label{s-intro}

We  start by recalling  the very well known 
  concept, due to Gelfand, of harmonic analysis on a locally compact group. The main problem of harmonic analysis on a  group $G$ is to understand some important unitary representations of $G$, such as  $L^2(G)$. One approach to this problem is to break it into  two parts:

\begin{enumerate}

\item[(G1)] Describe conveniently (possibly fully classify) the unitary dual $\hat G$ of $G$, i.e., the set of all equivalence classes of irreducible unitary representations of $G$.

\item[(G2)] Decompose  important unitary representations of $G$  in terms of $\hat G$ (as direct integrals, for example).

\end{enumerate}

Observe that the main  importance of (G1) comes from (G2).

Some very important problems of the modern theory of automorphic forms are
typical problems of non-commutative harmonic analysis (in a broad sense), and progress in harmonic analysis has consequences in the theory of automorphic forms.

From the other side, in building  harmonic analysis on reductive groups,
automorphic forms are very useful. They are a very rich source of relevant
ideas and concepts.

We  review some of these connections in this  paper, giving the picture from our point of view, which is closely related to  harmonic analysis. We  review only connections that have appeared primarily 
related to 
 our work on problems of harmonic analysis. Therefore, a number of other very interesting connections  are omitted (among others, the work of F. Shahidi  contains very nice interactions).  In this paper, we concentrate on  relatively simple, but still interesting cases. Since both fields are very technical and often have  very  complicated notation, we  try to keep the technical part as simple as possible and   often give   suggestive examples rather than 
 the full results. We  often simplify the situation as much as possible (trying not to oversimplify). 

We  deal only  with  classical groups  in this paper. These groups have been  generators of progress at some crucial stages in the  development of  harmonic analysis 
as well as the modern theory of automorphic forms. 
We  now very briefly review the topics that we cover in the paper (one can find more details  in the paper).

The unitary dual $\hat G$ carries a natural topology  (defined in terms of approximation of matrix coefficients on compact subsets; see section 2.). For the commutative groups, the Pontryagin dual is formulated in this topology. In the non-commutative case, $\hat G$ does not need to be topologically homogenous. Of particular interest are the most singular representations of $\hat G$, the isolated points of this space, i.e.,  the isolated representations. In the case of unitary duals of reductive groups over local fields 
which are classified, usually isolated representations (of the group and its Levi subgroups)  generate the whole unitary dual using  some very standard 
constructions.

The part of the unitary dual of $G$ which takes part in the decomposition of a unitary representation $\Pi$ is called the support of $\Pi$, and it is denoted by
$$
\text{supp}(\Pi).
$$
Depending on the $\Pi$ which one considers, $\text{supp}(\Pi)$ can be a relatively small part of $\hat G$. Therefore, we may be able to avoid general problem (G1), at least for dealing with such unitary representations (and focus our attention only on the classification of representations in  $\text{supp}(\Pi)$).
 This was the case in  Harish-Chandra's fundamental work on the explicit decomposition of the regular representation of semi-simple real group $G$ on $L^2(G)$ (at the time when Harish-Chandra decomposed those representations for general semi-simple groups, unitary duals of simple groups were classified only for very low real ranks).  
 There are also  examples of unitary representations of different type. We    next consider the unitary representations of general linear groups which have  the most delicate parts of the unitary duals in their supports, all the isolated representations.
 
 The support of $L^2(G)$, which was classified by Harish-Chandra, is called the tempered dual of $G$, and it is denoted by
 $$
 \hat G_{\text{temp}}.
 $$

The support of a unitary representation $\Pi$ can have its own isolated representations (which do not need to be isolated in $\hat G$), and the fact that they are isolated here may indicate their relevance for the unitary representation that one studies. The following  fact is a nice example of this. For $\pi \in \hat G_{\text{temp}}$, we have
$$
\text{$\pi$ is isolated in  $ \hat G_{\text{temp}} \iff \pi$ is square integrable}.
$$

Let $G$ be a  reductive group defined over a number field $k$.
Besides the regular representation  (and the isolated points in its support), 
we also consider  the representation of a local factor $G(k_v)$ of the adelic group $G(\mathbb A_k)$ on the space of square integrable automorphic  forms $L^2(G(k)\backslash G(\mathbb A_k))$. The support of this representation is called the automorphic dual of $G$ at $v$  (see \cite{Cl-Park} or the fourth section for a precise definition). Automorphic duals are very important, since they are related to a number of very hard questions in number theory (starting with Selberg $\frac14$-conjecture). We do not know them even in the simplest cases, like $SL(2)$ or $GL(2)$. Nevertheless, we know pretty well what we can expect, at least in some cases. Further, we can prove a number of not-trivial facts about them. Representations in the automorphic dual which are unramified\footnote{The term spherical is also often used instead of unramified.} with respect to a fixed maximal compact subgroup of $G(k_v)$ (i.e., containing a non-trivial vector fixed by the maximal compact subgroup) may be of particular interest. 
They are called Ramanujan duals\footnote{The unramified classes in the unitary dual will be called the unramified unitary dual.} (following \cite{BLS}). We   also denote $k_v$ by $F$. In what follows, we take $F$ to be non-archimedean\footnote{In this case, we consider maximal compact  subgroups $G(\mathcal O_F)$, where $\mathcal O_F$ is the maximal compact subring of $F$.} (although a number of facts that we  discuss hold or are expected to hold also  in the archimedean case).

 Now we move to the case of general linear groups,  which are  better understood then the other reductive groups. Take an irreducible square integrable representation $\sigma$ of $GL(n,F)$. Equivalently, we might say that we took an isolated representation\footnote{In what follows, by isolated representation we shall always mean isolated modulo center (see the third section for the definition).} from the tempered dual of $GL(n,F)$. Fix a positive integer $m$ and consider the  representation
\begin{equation}
\label{eq-ind-1}
\text{Ind}_P^{GL(mn,F)}(|\det|_F^{(m-1)/2}\sigma\otimes |\det|_F^{(m-1)/2-1}\sigma\otimes\dots\otimes |\det|_F^{-(m-1)/2}\sigma),
\end{equation}
parabolically induced from the appropriate parabolic subgroup which is standard with respect to the minimal parabolic subgroup consisting of upper triangular matrices. The above representation 
has a unique irreducible quotient which is denoted by
$$
u(\sigma,m),
$$
and called a Speh representation. Then each  isolated representation in the unitary dual of a general linear group is a Speh representation, and Speh representations are almost always isolated ($u(\sigma,m)$ is isolated if and only if $m\ne 2$ and if $\sigma$ is  isolated\footnote{Equivalently,  $\sigma$ does not correspond to  a segment of cuspidal representations  of length two in the Bernstein-Zelevinsky theory.} in $\widehat{GL(n,F)}$). It is an important fact  that Speh representations are in the automorphic dual (\cite{Jq}). A further very important fact is that they are always isolated in the automorphic dual  (\cite{MulSp} and \cite{LoRuSa}). Moreover, each isolated representation in the automorphic dual is expected to be some Speh representation (this would hold if we assume the generalized Ramanujan conjecture).

We have seen that the condition of being isolated in the tempered dual has a precise  (and important)  representation theoretic meaning.  The precise (arithmetic) meaning of the condition of being isolated in the automorphic dual  is  less clear. Let us recall  a very important and elegant paper \cite{Ka} of D. Kazhdan, where he proves that the trivial representation is isolated in the unitary dual of a simple algebraic group of rank $\ne1$  over local field, and from this he derives some important arithmetic consequences.

Related to the result of D. Kazhdan, it is interesting to note that for $SL(n,F)$, $n\ne2 $, the trivial representation is also the only isolated representation in the unramified unitary dual for these groups.
Clearly, the trivial representation is automorphic and isolated  in the Ramanujan dual (also for $n=2$; this follows from a general fact proved in  \cite{Cl-tau}). Further, it is also  expected to be  the only isolated representation there. Therefore, the set of isolated representations in the unramified unitary dual and the  isolated representations in the Ramanujan dual are expected to coincide for $SL(n)$, with the exception $n=2$. We have a surprisingly different situation for
 other classical groups, as we  see next. We  consider the example of $Sp(340,F)$.

 The first surprise is that the
number of isolated representations in the unramified unitary dual of
$Sp(340,F)$ is
$$
 11\ 322\ 187\ 942
$$
(\cite{MuT} and \cite{T-auto}). Further, G. Mui\'c has proved an important fact that  these representations are all automorphic (\cite{Mu-unit}; this is also how he proved their unitarity). In this way we get a huge number of isolated representations in the Ramanujan dual (isolated points there consist of at least $11\ 322\ 187\ 942$ above  representations). 
The following surprise is that \cite{Mu-unit}, \cite{MuT} and Conjecture ``Arthur + $\epsilon$" of L. Clozel from \cite{Cl-Park}, would imply that the number of isolated representations in the Ramanujan dual is
$$
568\ 385\ 730\ 874,
$$
which is substantially bigger number than the number of isolated representations in the unramified unitary dual\footnote{We have an intrinsic characterisation  of the tempered dual and its isolated points among all the irreducible unitary representations (thanks to Harish-Chandra and W. Casselman). A similar situation might be the case regarding the Ramanujan dual and its isolated points (see the fifth section).}.

All this shows that compared to the $SL(n)$-case, new phenomena happen here (or are expected to happen).  A new fact is that we have a huge number of isolated representations  in the unramified unitary dual, and also in the Ramanujan dual. A new phenomenon  here is that  the  isolated unramified representations in the unitary dual are expected to form  a very small portion of  isolated representations in the Ramanujan dual.

The above example also  raises some questions.
The first one  is of an arithmetic nature.
Having in mind D. Kazhdan's paper \cite{Ka},  one may ask if  the above stunning difference regarding the number of isolated points for $SL(n)$ and $Sp(2n)$ groups has some arithmetic explanation or consequence?

The second question  is related to  harmonic analysis: why does  such a  small portion of  representations which are expected to be isolated in the Ramanujan dual remain isolated in the whole unramified dual (this was not the case for $SL(n)$)? The reason is that for each of $568\ 385\ 730\ 874$ strongly negative unramified representations, excluding $11\ 322\ 187\ 942$ of them (i.e., the isolated ones in the unramified dual), 
 there is a complementary series at whose end  this representation lies (and complementary series are not expected to be in the Ramanujan dual). Complementary series representations start with an irreducible representation parabolically induced from a unitary one. Therefore, in the example of $Sp(340,F)$, for the $557\ 063\ 542\ 932$ parabolically induced representations which are involved (where corresponding complementary series start),  we need to know their irreducibility. 
 
 The above short discussion indicates that  very often we have complementary series which end with representations that are expected to be isolated in the Ramanujan dual. But we have still a huge number of isolated representations in the unramified dual.
Therefore, the following  question arises: why are there still $11\ 322\ 187\ 942$   isolated representations in the unramified dual? The answer is roughly: no  complementary series ends with them (which is related to the question of reducibility of parabolically induced representations).

The above discussion suggests that if we want to know explicit answers regarding harmonic analysis or automorphic forms, we need also to have very explicit knowledge of complementary series (not just on some algorithmic level). This implies that to start, we need  to  have  a very explicit understanding of the question of irreducibility/reducibility of parabolically induced representations by  unitary ones  (unramified ones in this case).
Such an understanding of the required irreducibility/reducibility is obtained by G. Mui\'c in \cite{Mu-no-un}. We are not going to  explain it here, but rather we go to a different (and dual) setting, where such an understanding is also crucial, and explain how one can deal with the question of irreducibility/reducibility there.  
We shall see how these   basic questions of  harmonic analysis lead to some deep problems in the number theoretic setting. The majority of this paper is devoted to this case. Below, we  sketch only very basic idea.

Let us return to the fundamental  problem (G1).
A standard strategy for  (G1) is to classify the non-unitary dual of $G$ (formed of equivalence classes of irreducible representations of $G$), and then classify the unitarizable classes in it (i.e., $\hat G$).
The Langlands classification
of the non-unitary dual reduces this problem to the problem of tempered duals of its Levi subgroups. A very significant step in classifying  the tempered dual is classifying the irreducible square integrable representations.

We now  restrict to the case of classical groups (symplectic, orthogonal or unitary). For simplicity,  in this discussion we consider only  the case of symplectic groups (in the paper we also consider  the case of groups $SO(2n+1,F)$). Here the structure of Levi subgroups (which are direct  products of general linear groups and a symplectic group), and the existing classification of tempered duals of general linear groups, reduce the problem of the non-unitary dual to the problem of  classifying  the tempered duals of symplectic groups. To get irreducible tempered representations from the square integrable ones, one needs to classify all irreducible subrepresentations of the representations parabolically induced from the irreducible square integrable ones. We get the simplest example of such induced representations  if we  take  irreducible square integrable representations $\delta$ and $\pi$ of a general linear group and a symplectic group, and consider the  representation
\begin{equation}
\label{eq-ind-2}
\text{Ind}(\delta\otimes\pi)
\end{equation}
of a symplectic group,  parabolically induced from a maximal parabolic subgroup. Actually, the theory of $R$-groups reduces the general case to the question of whether the  representations \eqref{eq-ind-2} reduce (see \cite{Go}). The representation \eqref{eq-ind-2} can be reducible only if $\delta $ is self dual (i.e., equivalent to its own contragredient). Therefore, we  assume this  in what follows.

One   way to try to understand the reducibility of \eqref{eq-ind-2} is the following. Suppose $\sigma$ in the induced representation \eqref{eq-ind-1} is   an irreducible cuspidal representation   of a general linear group with unitary central character (cuspidal representations are characterized by the property that their matrix coefficients are compactly supported modulo center - they can be characterized as isolated representations of the non-unitary dual, which also carries a natural topology). Then the representation \eqref{eq-ind-1}  contains a unique irreducible subrepresentation, which we  denote by
$$
\delta(\sigma,m).
$$
This representation is  square integrable, and one gets all square integrable representations in this way (\cite{Z}). Now we can slightly reinterpret the question of reducibility of 
\eqref{eq-ind-2}. It is equivalent to
 the question of reducibility of 
\begin{equation}
\label{eq-ind-3}
\text{Ind}(\delta(\sigma,m)\otimes\pi),
\end{equation}
when $\sigma$ is a self dual irreducible cuspidal representation of a general linear group. If we fix $\sigma$ and $\pi$ as above, then there is one parity of positive integers (even or odd), such that for representations $\delta(\sigma,m)$, with $m$ from that parity, the representation \eqref{eq-ind-3} is  always irreducible (this parity  depends only on $\sigma$). For the  representations $\delta(\sigma,m)$ with $m$ from the other parity, the representation \eqref{eq-ind-3} is  always reducible, with finitely many exceptions. Denote  by
$$
\text{Jord}(\pi)
$$
the set of all such exceptions $\delta(\sigma,m)$ (for fixed $\pi$; we let  $\sigma$  run over all equivalence classes of self dual irreducible cuspidal representation of  general linear groups)\footnote{C. M\oe glin has defined Jordan blocks (slightly differently; one can find  her original definition  in \cite{Moe-Ex}).   Here we use a  different notation  than in  \cite{Moe-Ex} and the other papers, where elements of $\text{Jord}(\pi)$ are pairs $(\s,m)$ instead of square integrable representations $\d(\s,m)$  (recall  that  $(\s,m)\leftrightarrow \d(\s,m)$ is a bijection by \cite{Z}).}. In other words, roughly speaking  $\text{Jord}(\pi)$ takes  care of all the singularities of the parabolic induction of \eqref{eq-ind-3}. Therefore, it is a crucial object for understanding the tempered representations which can be obtained from $\pi$. We  illustrate the importance of $\text{Jord}(\pi$) with the following 

\begin{example}
\label{ex-int}  In this example, we consider odd orthogonal groups. A direct consequence of \cite{Sh2} and \cite{T-irr} is that 
$$
\text{Jord}(1_{SO(1,F)})=\emptyset.
$$
Thus, for $m$ from one parity (depending on $\sigma$), the representation 
$$
\text{Ind}(\delta(\sigma,m)\otimes1_{SO(1,F)})
$$
 is always reducible, while for $m$ from the other parity it is always irreducible ($\sigma$ is an irreducible self dual cuspidal representation of a general linear group). Now, for a self dual square integrable representation\footnote{Recall that $\text{Ind}(\delta\otimes\pi)$ is irreducible if $\delta$ is not self dual.} $\d$ we have
$$
\text{$\text{Ind}(\delta\otimes\pi)$ is reducible $\iff \text{Ind}(\delta\otimes1_{SO(1,F)})$ is reducible and $\d\not\in\text{Jord}(\pi)$}.
$$
In other words, roughly Jord$(\pi)$ measures the difference between  tempered induction of $\pi$ and the trivial representation $1_{SO(1,F)}$\footnote{For symplectic groups, the above discussion also holds if we exclude the trivial representation of $GL(1,F)$. This difference is caused by the fact that  $\text{Jord}(1_{SO(1,F)})=\{1_{GL(1,F)}\}$, which again directly follows from \cite{Sh2} and \cite{T-irr}.}.
This difference is not very big since Jord$(\pi)$ must be finite. For the Steinberg representation, we have
$$
\text{Jord}(\text{St}_{SO(2n+1,F)})=\{\delta(1_{F^\times},2n)\}=\{\text{St}_{GL(2n,F)}\}.
$$
\end{example}

It is interesting and very important  that $\text{Jord}(\pi)$ has  arithmetic meaning. We  describe this briefly.  
The Langlands program predicts a natural parameterization of an irreducible  representation $\tau$ of a split reductive groups $G$ over $F$ by a homomorphism 
$$
\Phi_{G}(\tau) 
$$
of $W_F\times SL(2,\mathbb C)$ into the complex dual Langlands  group, satisfying certain requirements (such homomorphisms are called admissible). The parameterization $\tau \mapsto  \Phi_{G}(\tau)$ is called the local Langlands correspondence for $G$ (these correspondences  are expected to be instances of a more general phenomenon, called functoriality). Such correspondences can  be viewed as generalizations of the local Artin reciprocity law from  class field theory. Representations with the same parameter $
\Phi_{G}(\tau)
$ are called $L$-packets (these sets are expected to be finite). The representations inside $L$-packets are expected to be parameterized by equivalence classes of  irreducible representations of the component group of $\Phi_{G}(\tau)$ (see the sixth  section for more details).
More then a decade ago, the existence of such a correspondence was established for general linear groups in full generality (\cite{LaRpSt}, \cite{HT} and \cite{He}). We denote them by $\Phi_{GL}$ (here $L$-packets are singletons).

 One of the  very  big breakthroughs in the theory of automorphic forms,  obtained by J. Arthur  in his  recent book \cite{A-book}, is a classification of irreducible tempered representations of classical $p$-adic groups\footnote{Arthur classification is still conditional, but this is expected to be removed  soon (when the facts on which  \cite{A-book} relies become available). Therefore, we shall use  \cite{A-book} in what follows  without mentioning that there is still a piece to be completed.}. This classification can be viewed as an instance of a local Langlands correspondences. In the case of unitary groups, such a classification was obtained earlier by C. M\oe glin in \cite{Moe-Pac}.  A fundamental result, which  tells us that crucial objects of  harmonic analysis are directly related to the fundamental objects of the number theory, is  the following theorem of C. M\oe glin:
 
\begin{theorem}
\label{th-Moe-int}
 The admissible homomorphism that J. Arthur has attached to a square integrable representation  $\pi$ is 
\begin{equation}
\label{int-eq-sum-L}
\underset{\s\in \text{Jord}(\pi)}
\oplus
\Phi_{GL}(\sigma).
\end{equation} 
\end{theorem}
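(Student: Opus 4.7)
The plan is to identify the two sides of \eqref{int-eq-sum-L} through their common control by the reducibility of parabolic induction. On the harmonic analysis side, $\text{Jord}(\pi)$ is by definition the set of self-dual $\delta(\sigma,m)$ for which $\text{Ind}(\delta(\sigma,m)\otimes\pi)$ exhibits exceptional reducibility, and together with the partial cuspidal support and the $\epsilon$-function it classifies $\pi$ via the M\oe glin--Tadi\'c parametrization of discrete series for classical groups. On the Galois side, the parameter $\Phi_G(\pi)$, composed with the natural embedding ${}^LG\hookrightarrow GL(N,\mathbb{C})$, is a semisimple $N$-dimensional representation of $W_F\times SL(2,\mathbb{C})$ which decomposes as a direct sum of irreducibles of the form $\tau\boxtimes\nu_m$, with $\tau$ a bounded irreducible of $W_F$ and $\nu_m$ the $m$-dimensional irreducible of $SL(2,\mathbb{C})$. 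The multiplicity-freeness of this decomposition is a hallmark of Arthur parameters of discrete (as opposed to merely tempered) representations, and under $\Phi_{GL}$ each summand $\tau\boxtimes\nu_m$ corresponds to a unique self-dual square integrable $\delta(\sigma,m)$ of a general linear group.

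First I would show that every summand $\tau\boxtimes\nu_m$ of $\Phi_G(\pi)$ yields an element of $\text{Jord}(\pi)$. The key input is Arthur's endoscopic identification of $\Phi_G(\pi)$ with the Langlands parameter of the twisted transfer of $\pi$ to $GL(N,F)$, combined with Shahidi's theorem linking the reducibility points of $\text{Ind}(\delta(\sigma,m)\otimes\pi)$ to the poles of the local $L$-function $L(s,\pi\times\delta(\sigma,m))$. On the Galois side, such poles are detected precisely when $\Phi_G(\pi)$ and $\Phi_{GL}(\delta(\sigma,m))$ share a common irreducible constituent. Second, I would establish the reverse inclusion: every $\delta(\sigma,m)\in\text{Jord}(\pi)$ produces such a summand, by running the $L$-function argument in the opposite direction and invoking the parity dichotomy (even or odd $m$, depending on $\sigma$) for cuspidal reducibility, exactly as in Example \ref{ex-int}.

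To upgrade this pointwise matching to the equality of direct sums in \eqref{int-eq-sum-L}, I would use a dimension count controlled by the M\oe glin--Tadi\'c classification: the total dimension of $\bigoplus_{\sigma\in\text{Jord}(\pi)}\Phi_{GL}(\sigma)$ is prescribed by the Jordan blocks and must equal the dimension $N$ of $\Phi_G(\pi)$, leaving no room for extraneous summands. The main obstacle is the bridge between the analytic side (reducibility of parabolically induced representations defining $\text{Jord}(\pi)$) and the arithmetic side (the decomposition of Arthur's parameter as a Galois--Weil representation). Crossing this bridge requires Shahidi's theory of local coefficients and Arthur's endoscopic character identities working in tandem, and this is the technical heart of M\oe glin's proof; a secondary difficulty is controlling the correct parity in each cuspidal string, which rests on the cuspidal reducibility points computed in \cite{Sh2} and \cite{T-irr}.
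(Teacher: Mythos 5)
There is no proof in the paper to compare against: the statement is quoted as a theorem of C.~M\oe glin (Theorem 1.3.1 of \cite{Moe-mult-}; it reappears in the body as Theorem \ref{th-Moe}), and this survey only records it. So your proposal must be judged on its own, and as it stands it is a plan rather than a proof, with the decisive step missing. The step you attribute to ``Shahidi's theorem'' --- that the reducibility of $\text{Ind}(\delta(\sigma,m)\otimes\pi)$ is governed by poles of $L(s,\pi\times\delta(\sigma,m))$, hence by shared irreducible constituents of the parameters --- is a theorem only for \emph{generic} $\pi$ (\cite{Sh1}, \cite{Sh2}); a general square integrable representation of a classical group need not be generic, and for such $\pi$ the identification of the Plancherel-measure/reducibility data with the Galois-side $L$-function is precisely what has to be proved. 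In M\oe glin's argument this is achieved through Arthur's normalization of intertwining operators and his local character identities for the twisted transfer to $GL(N,F)$, combined with her explicit construction of discrete series from cuspidal data; saying that Shahidi's local coefficients and Arthur's endoscopic identities ``work in tandem'' names the ingredients but supplies none of the argument, as you yourself concede by calling it the technical heart.

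A secondary remark: the final dimension count is both unnecessary and delicate. If you really had both inclusions (every summand $\tau\boxtimes\nu_m$ of Arthur's parameter corresponds to an element of $\text{Jord}(\pi)$, and conversely), then equality of the two multiplicity-free sums is immediate, since discrete parameters are multiplicity free and $\text{Jord}(\pi)$ is a set; no counting is needed. Conversely, if one inclusion is not established, the fact you want to use --- that the blocks in $\text{Jord}(\pi)$ have total dimension exactly $N$ (``fullness'' of the Jordan blocks) --- is not a formal consequence of the M\oe glin--Tadi\'c classification taken as a black box: it is itself one of the nontrivial results of M\oe glin's work in this same circle of ideas, and that classification was established under hypotheses verified through \cite{A-book}, so invoking it at this point risks circularity rather than closing the argument.
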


Therefore, 
Arthur's classification singles out    crucial information 
for the tempered induction 
related to $\pi$. 

Besides this information, there are a number of other questions important for  harmonic analysis which  still remain to be answered. One of them is how irreducible square integrable representations are built from   cuspidal ones.  In the case of general linear groups, this question is answered by the Bernstein-Zelevinsky theory. For other classical groups, this question is directly related to  understanding the internal structure of packets\footnote{They are not called $L$-packets, since \cite{A-book} does not address the question of $L$-functions.}.

C. M\oe glin has characterized  the  parameters  corresponding to the  cuspidal representations in the Arthur classification. They have a very  simple description (see the seventh section). Now we can use the classification of irreducible square integrable representations of classical $p$-adic groups modulo cuspidal data (obtained in \cite{Moe-Ex} and \cite{Moe-T}), to obtain a description of irreducible square integrable representations of classical $p$-adic groups in terms of   cuspidal ones. This also gives representation theoretic information on how packets are build.

There are a number of questions that one can further consider regarding the structure of the packets. We  give one  example. A new feature showing up in the Arthur classification (which was not present for groups like $GL(n)$ or $SL(n)$), is the existence of square integrable packets containing both cuspidal and non-cuspidal representations at the same time. The extreme instance of this phenomenon is (square integrable) packets containing at the same time  a representation supported on the minimal parabolic subgroup and a cuspidal representation. Such packets are called packets with antipodes. A simple application of the representation theoretic description of packets is the fact that  
$SO(2n+1,F)$ has   a packet of this type  if and only if $n$ is even.

We are very thankful to C. M\oe glin for discussions and for providing us with some references 
in 
the last  sections of the paper. 
G. Savin  has read the first version of this paper, and  gave us a number
of useful suggestions. I. Mati\'c  also gave suggestions to the following version of the paper. C. Jantzen's numerous suggestions helped us a lot to  improve the style of the paper. I. Badulescu gave us a number of useful mathematical remarks to improve the paper. Discussions with M. Hanzer, A. Moy and G. Mui\'c were helpful during the preparation of this paper. The referee's numerous corrections and general remarks helped a lot to improve the exposition of the paper. We are  very thankful to all of them.

We now briefly discuss the contents of the paper section by section. The second section reviews very basic notions related to the natural topology on representations. 
In the third section, we
consider the example of $GL(n)$ and the isolated representations in this case.
 The fourth section introduces the automorphic duals, and follows the case of $GL(n)$. The fifth section studies the unramified duals of classical groups and automorphicity in this setting. In the sixth section, we follow how questions of harmonic analysis bring us to the   square integrable packets of classical groups, and the recent work of J. Arthur and C. M\oe glin. The seventh section recalls M\oe glin's description of cuspidal representations in the Arthur classification, which is followed up in the eighth section by a description of  the internal structure of packets.

\section{Topology, isolated representations, support}
\label{s-top}
\setcounter{equation}{0}
\setcounter{footnote}{0}

The unitary dual $\hat G$ is  a topological space in a natural way: $\pi$ is
in the closure of $X \subseteq \hat G$ if  and only if diagonal matrix
coefficients of $\pi$ on compact subsets can be approximated by 
finite sums of diagonal matrix coefficients from $X$. One can find more details in \cite{Di} and \cite{Fe}, or \cite {Mi} (the non-archimedean case is in \cite{T-Geo}).

 If $G$ is commutative then the Pontryagin duality is formulated in this topology. In this case $\hat G$ is a group and hence topologically homogenous.
 In the non-commutative case $\hat G$ does not need to be homogenous. Of particular importance are isolated representations (or isolated modulo center\footnote{If $G$ has non-compact center, the role of isolated representations is played by isolated representations modulo center. Let $\omega_\pi$ be the central character of $\pi \in \hat G$ and denote $\hat G_{\omega_\pi}=\{\tau\in\hat G; \omega_\tau=\omega_\pi\}$. Then we say that $\pi$ is isolated modulo center if $\pi$ is an isolated point of the topological space $\hat G_{\omega_\pi}$. These representations we  often simply call  isolated representations.}). They  are pretty  mysterious, but very important objects. Their unitarity is usually a very non-trivial fact. Local components of square integrable automorphic forms are a big source of isolated (or isolated modulo center) representations.

For performing step (G2) of the Gelfand concept on a fixed unitary representation $\Pi$ of $G$, we usually do not need the whole $\hat G$, but only the representations which are in the support of the measure on $\hat G$ which decomposes $\Pi$ into a direct integral of elements of $\hat G$ (we shall not go  into detail here regarding direct integrals).  
This support of the measure is denoted by
$$
\text{supp}(\Pi).
$$
One can describe the support of $\Pi$ without finding  the measure, but 
using only the topology. For $\pi\in\hat G$, we have: $\pi\in \text{supp}(\Pi)$ if and only if it is
 weakly contained in   $\Pi$, i.e., 
 if  diagonal
matrix coefficients of $\pi$ on compact subsets can be approximated by
finite sums of diagonal matrix coefficients of $\Pi$
(see \cite{Di} or \cite{Fe} for more details).

Clearly, $\text{supp}(\Pi)$ inherits the topology from $\hat G$. Even for $\text{supp}(\Pi)$,  isolated representations can be again very distinguished.
Let us illustrate this with the following:

\begin{example}
\label{ex-L2}
{\rm Let $G$ be a semi-simple algebraic group over a local field $F$. Then the support of the regular representation of $G$ on $L^2(G)$ by  right translations is
$$
\text{supp}(L^2(G))=\hat G_{\text{temp}},
$$
where 
$\hat G_{\text{temp}}$ denotes the set of all tempered representations in $\hat G$, i.e., those  whose matrix coefficients are in $L^{2+\epsilon}(G)$ for each $\epsilon >0$. A very important class of tempered representations (for harmonic analysis, as well as for the theory of automorphic forms) are square integrable representations (i.e., those ones whose matrix coefficients are in $L^{2}(G)$). Now for $\pi\in \hat G_{\text{temp}}$, we have 
$$
\text{$\pi$ is square integrable $\iff$ $\pi$ is isolated in $\hat G_{\text{temp}}$}.
$$
There is a very effective criterion  of Harish-Chandra and of W. Casselman, for checking square integrability of an irreducible representation. }

\end{example}

Let us recall that the essential part of the  monumental work of Harish-Chandra \cite{HC} was related  to the regular representation of a semi-simple group $G$. In the real case, Harish-Chandra has constructed all the isolated points of $\hat G_{\text{temp}}$'s. This  was enough for him to explicitly decompose  $L^2(G)$. The tempered dual $\hat G_{\text{temp}}$  was classified by A. Knapp and G. Zuckermann later in \cite{KnZu}, based on the work of Harish-Chandra. Here isolated representations in the tempered duals were crucial for the construction of the whole tempered dual (we shall see similar examples later).

In the book \cite{GN} of I.M. Gelfand and M.A. Naimark on harmonic analysis on complex classical groups, the authors wrote the  lists of irreducible unitary representations of those groups. They expected that the representations from  the lists  form  unitary duals of complex classical groups  (their lists were very simple). In the case of symplectic and orthogonal groups, the incompleteness of the lists
 was  clear pretty soon.
For the special linear groups, E.M. Stein  constructed in \cite{St}, in a relatively simple way, representations (complementary series) which were not in the lists of Gelfand and Naimark (for $SL(2n,\mathbb C)$, $n\geq 2$).

The above simple construction of E.M. Stein, the lack of significant progress in giving an  explicit classification of the whole unitary dual for a long time (even for the groups like $SL(n,\mathbb C)$) and very complicated approaches to the problem, have sometimes resulted  in  doubts that the unitary dual is the right object for  harmonic analysis, that it may be too big and complicated, consisting perhaps mainly of non-relevant representations for  harmonic analysis. 
That very successful   strategy of Harish-Chandra might be the right way for the general approach: to go directly to (G2)  for a specific important unitary representation (bypassing the general problem (G1)), and concentrate only on the part of $\hat G$ which is relevant for the unitary representation that we consider.

Considering some important groups, we shall see below   why this strategy is not likely to be very successful for the general case. Already some important unitary representations that show up in the theory of automorphic forms indicate this. For example, we shall see for some groups  that the most delicate part of the unitary duals, the isolated representations in $\hat G$, all show up. These representations are very  distinguished representations, and important for number of other problems.

In the unitary duals
 appear  very big and very complicated families of non-isolated representations (complementary series), which are not expected to show up in the  automorphic setting, at least not in the split  case. Therefore, from the point of view of automorphic forms, unitary duals may look too big, with significant parts which do not seem relevant. We shall see that even this part of the unitary duals  can be  interesting for the theory of automorphic forms.

To get an idea of isolated representations, their relation to automorphic forms, and the role of complementary series, we  go to the relatively  well understood case of $GL(n)$:

\section{The example of $GL(n)$}
\label{s-gl}
\setcounter{equation}{0}

All  parabolic subgroups of the groups that we  consider in this paper, are assumed to be standard with respect to the minimal parabolic subgroup consisting of upper triangular matrices in the group.

Let $F$ be a local field (or the ring of adeles of a global field). Let $\s$ be an irreducible square integrable  representation\footnote{These representations  are also  called   square integrable representations modulo center, since the requirement is that the absolute value of their matrix coefficients be square integrable functions modulo center.} of $GL(n,F)$ (in the adelic case we take an irreducible cuspidal representation of adelic $GL(n)$). 
Fix a positive integer $m$. Let $P$ be the  parabolic subgroup of $GL(nm,F)$ whose Levi subgroup is in a natural way isomorphic to 
$$
GL(n,F)\t \dots \t GL(n,F).
$$
  Consider the parabolically induced representation
\begin{equation}
\label{eq-ind}
\text{Ind}_P^{GL(mn,F)}(|\det|_F^{(m-1)/2}\s\o |\det|_F^{(m-1)/2-1}\s\o\dots\o |\det|_F^{-(m-1)/2}\s)
\end{equation}
($|\ |_F$ denotes the normalized absolute value on $F$). The above representation 
has a unique irreducible quotient, which is denoted by
$$
u(\s,m),
$$
and called a Speh representation. Observe that if $\s$ is a unitary character of $F^\times$, then $u(\s,m)=\s \circ \text{det}$ is also a character of $GL(m,F)$. This is the reason that  for archimedean $F$, one gets Speh representations which are not characters only if $F=\mathbb R$ and $\s$ is a square integrable representation of $GL(2,\mathbb R)$ (this is where the name  comes from; see \cite{Sp}).

Speh representations are very important in the theory of automorphic forms. 
It is interesting that we first came (in 
\cite{T-AENS}) to the Speh representations and their unitarity without knowing their role in the automorphic forms, studying  complementary series (which  are not expected to show up 
in the setting of automorphic forms in this case; we  comment on this later). We  briefly sketch below how we came to the Speh representations.

Obviously $u(\s,1)=\s$ is unitary (square integrable representations are unitary). Suppose that $u(\s,m)$ is unitary. Consider the family (complementary series)
\begin{equation}
\label{eq-comp}
\text{Ind}_{P'}^{GL(2mn,F)}(|\det|_F^{\a}u(\s,m)\o |\det|_F^{-\a}u(\s,m)), \quad 0\leq \a<1/2,
\end{equation}
where $P'$ is the appropriate  parabolic subgroup. This is irreducible for $\a=0$ by \cite{Be-P-inv} (and also it is unitary). For other $\a$'s as above, it is also irreducible, which is easy to see. Further, these representations are Hermitian. From this it easily follows that all the representations in \eqref{eq-comp} are unitary. If we put in \eqref{eq-comp}  $\a=1/2$ (the induced representation is then no more irreducible),     we get for a subquotient  the  representation
\begin{equation}
\label{eq-end}
\text{Ind}_{P''}^{GL(2mn,F)}(u(\s,m+1)\o u(\s,m-1)),
\end{equation}
if we can prove that \eqref{eq-end} is irreducible. Then the representation \eqref{eq-end} is unitary , since it is at the end of the complementary series (this follows from \cite{Mi}). Now a simple construction of unitary representations, which we call unitary parabolic reduction (since it is opposite to the unitary parabolic induction), implies that $u(\s,m+1)\o u(\s,m-1)$ is unitary, and thus also $u(\s,m+1)$ is unitary.

Now we shall see that  Speh representations are distinguished from the point of view  of harmonic analysis.
We assume in the rest of the paper that $F$ is a local non-archimedean field (although some facts also hold  in the archimedean case). Now, let $\sigma$ be a unitary irreducible cuspidal representation\footnote{These representations are  very distinguished  square integrable representations. Their  matrix coefficients are compactly supported modulo center. Irreducible cuspidal representations can be  characterized as isolated points in the non-unitary dual (see \cite{T-Geo}).} of $GL(n,F)$. Then the representation \eqref{eq-ind}  has a unique irreducible subrepresentation, which will be denoted by
$$
\d(\s,m).
$$
This representation is square integrable, and J. Bernstein has shown that one gets all such representations in this way. An old result from \cite{T-top-GL} gives the following characterization of isolated points in the unitary dual of general linear groups:

\begin{theorem}
\label{th-iso-gl}
Let $\pi\in \widehat{GL(k,F)}$. Then $\pi$ is isolated (modulo center) if and only $\pi\cong u(\d(\rho,l),m)$ for some  irreducible unitary cuspidal representation $\rho$ of $GL(k/(lm),F)$ and some positive integers $l\ne 2$ and $m\ne 2$ (clearly then $lm$ divides $k$).
\end{theorem}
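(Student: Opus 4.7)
The plan is to invoke the Tadi\'c classification of $\widehat{GL(k,F)}$, under which every irreducible unitary representation of any $GL(k,F)$ is uniquely (up to permutation) a parabolically induced product $\pi_1\t\dots\t\pi_r$ with factors in $B\cup B'$, where
\[
B=\{u(\d(\rho,l),m):\rho\text{ unitary cuspidal},\ l,m\ge1\},\quad
B'=\{\av^{\a} u\t\av^{-\a}u:u\in B,\ 0<\a<\tfrac12\}.
\]
Consequently $\widehat{GL(k,F)}_{\omega_\pi}$ splits into cells indexed by the multiset of factors of each type; inside a cell only the $\alpha$-parameters of the $B'$-factors vary continuously, and in the Fell topology distinct cells can only touch at the boundary values $\a=0$ or $\a=\tfrac12$ of some $B'$-factor.

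For the necessity direction, if $m=2$ consider the complementary series $\av^{\a}\d(\rho,l)\t\av^{-\a}\d(\rho,l)$, $0<\a<\tfrac12$; at $\a=\tfrac12$ it becomes reducible and its Langlands quotient is exactly $u(\d(\rho,l),2)$, the degenerate $m'=1$ instance of \eqref{eq-end} in which $u(\d(\rho,l),0)$ is vacuous. If instead $l=2$, use $\av^{\a}u(\rho,m)\t\av^{-\a}u(\rho,m)$, $0<\a<\tfrac12$; analysing the two linked shifted Zelevinsky multisegments involved (two $m\times 1$ columns at heights $\pm\tfrac12$) shows that the composition series at $\a=\tfrac12$ contains, besides the \eqref{eq-end}-type subquotient, the representation $u(\d(\rho,2),m)$, arising as the $m\times 2$-rectangle obtained by merging the two columns sideways. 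In either case $\pi$ lies in the Fell-closure of a non-trivial complementary series and so is not isolated.

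For the sufficiency direction assume $l,m\ne 2$ and suppose $\pi_n\to\pi=u(\d(\rho,l),m)$ with $\pi_n\ne\pi$. Each $\pi_n$ is a product of $B\cup B'$-blocks; passing to a subsequence, its combinatorial type stabilises and only its $\a$-parameters vary, so the convergence must come from a boundary degeneration of some $B'$-factor. The limit at $\a\to 0^+$ has the form $u\t u\t(\text{other blocks})$, which carries a repeated $B$-factor and so cannot equal the single-block $\pi$ by the uniqueness of the factorisation. The limit at $\a\to\tfrac12^-$, for $u=u(\d(\rho',l'),m')$, is (times the other blocks) one of the two irreducible subquotients of $\av^{1/2}u\t\av^{-1/2}u$, namely
\[
u(\d(\rho',l'),m'{+}1)\t u(\d(\rho',l'),m'{-}1)\quad\text{or}\quad u(\d(\rho',l'{+}1),m')\t u(\d(\rho',l'{-}1),m'),
\]
corresponding to the two merge directions of the two shifted rectangles. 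For either to reduce to the single block $\pi$ one factor must be vacuous, forcing $m'-1=0$ (hence $m=2$) or $l'-1=0$ (hence $l=2$), contradicting the hypothesis. Thus $\pi$ is isolated.

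The hard part is the sufficiency step. It rests on two inputs: (i) identifying the full composition series of $\av^{1/2}u\t\av^{-1/2}u$ for an arbitrary Speh representation $u$ as consisting of exactly the two merge subquotients above, which is a combinatorial analysis of two linked shifted rectangular Zelevinsky multisegments; and (ii) the uniqueness-of-factorisation half of Tadi\'c's classification of the unitary dual, which rules out all other potential limit configurations. Neither ingredient alone is sufficient.
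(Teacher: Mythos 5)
The paper itself gives no argument for this theorem -- it simply cites \cite{T-top-GL} -- so the comparison is really with the strategy of that reference, and your skeleton (classification into products of Speh blocks and complementary series, analysis of the two boundary degenerations $\a\to 0$ and $\a\to 1/2$, and the endgame forcing $m'-1=0$ or $l'-1=0$) is indeed the right one. Your necessity direction is fine, and your input (i) is a true, citable fact: in the Grothendieck group $|\det|_F^{1/2}u\times|\det|_F^{-1/2}u$ for $u=u(\d(\rho',l'),m')$ is the sum of the two irreducible representations $u(\d(\rho',l'),m'+1)\times u(\d(\rho',l'),m'-1)$ and $u(\d(\rho',l'+1),m')\times u(\d(\rho',l'-1),m')$; this is essentially the determinantal identity behind \eqref{ch-Speh} (cf.\ \cite{T-ch}, \cite{CR}), though calling it ``a combinatorial analysis of two linked multisegments'' understates what is being used.

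The genuine gap is in the sufficiency direction, at the sentence ``passing to a subsequence, its combinatorial type stabilises and only its $\a$-parameters vary, so the convergence must come from a boundary degeneration of some $B'$-factor.'' Neither of the two inputs you list -- the composition series of the ends, and uniqueness of factorisation -- yields this. Uniqueness of factorisation is a statement about equality of representations, not about the Fell topology; it does not tell you that the closure of a fixed ``cell'' is obtained only by letting $\a$-parameters hit $0$ or $1/2$, nor that a sequence $\pi_n\to\pi$ must eventually have cuspidal support in the (finitely many twists of the) cuspidal support of $\pi$. For example, nothing in your argument excludes a sequence $u(\d(\rho_n,l),m)$ built from pairwise distinct cuspidal representations $\rho_n$ with the fixed central character converging to $\pi$; ruling this out requires harmonic-analytic input (continuity of central characters and of cuspidal supports/infinitesimal characters under weak containment, the behaviour of the dual topology under parabolic induction, and the fact that every limit point of a family is an irreducible subquotient of a limit of the corresponding induced representations), which is exactly the content of \cite{T-Geo} and \cite{T-top-GL}, with the general facts from \cite{Fe} and \cite{Mi}. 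So the proposal is correct in outline but assumes, rather than proves, the determination of the topology that is the actual substance of the theorem; to close it you must either import those results explicitly or reprove them, and that is where all the work lies.
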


In other words, if we have an isolated representation, then it is always a Speh representation. In the converse direction, a Speh representation is almost always isolated. The condition $l\ne 2$ and $m\ne 2$    spoils    the picture  given by the theorem a little bit\footnote{We have similar ``irregularity" in \cite{Ka} (rank must be $\ne 1$). This ``irregularity" was removed in the automorphic dual by L. Clozel (see \cite{Cl-tau}).} (the corresponding representations are subquotients of ends of complementary series, which easily implies that they can not be isolated).
 We shall  soon see that we get a completely regular picture  in the automorphic dual, which will be discussed in the following section.

{\bf Classification}: 
\begin{enumerate}
\item
It is very easy to state the classification of $\widehat{GL(n,F)}$ modulo square integrable representations: each representation  parabolically induced by a tensor product of Speh representations and complementary series \eqref{eq-comp} with $\a>0$, is in $\widehat{GL(n,F)}$, and  each representation $\pi$ in $\widehat{GL(n,F)}$ is obtained in this way. Further,  $\pi$ determines the Speh representations and complementary series inducing to $\pi$, up to a permutation. The classification holds also in the archimedean case in  exactly  the same form  and there irreducible  square integrable representations are very simple (see \cite{T-AENS} and \cite{T-R-C-old} or \cite{T-R-C-new} for both cases)\footnote{The complex case is particularly simple: each irreducible unitary representation   is parabolically induced  by tensor products of characters and complementary series starting with characters (constructed by E.M. Stein). This list differs from the list of I.M. Gelfand and M.A. Naimark in \cite{GN} only in that they have omitted complementary series of $GL(2n,\mathbb C)$ for $n\geq 2$ (i.e., Steins's complementary series).
The book of Gelfand and Naimark was much ahead of its time when it was published (1950), in particular regarding the intuition. It had strong influence on a number of further developments in harmonic analysis, as well as out of it.}.

\bigskip
\item
The above result (covering the non-archimedean as well as archimedean case, with the proofs along the same strategy) is just ``the tip of the iceberg". It is only a special case of a much more general result for  any local (finite dimensional) central division algebra\footnote{Recall that in the non-archimedean case the Brauer group is $\mathbb Q/\Z$ (the field case corresponds to the neutral element).} $A$ over $F$. For an irreducible square integrable representation $\s$ of $GL(n,A)$, let $s(\s)$ be the minimal positive exponent such that $\text{Ind}_{P'''}^{GL(2n,A)}(|\det|_F^{s(\s)/2}
\s\o |\det|_F^{-s(\s)/2}\s)$ reduces ($s(\s)$ is an integer  dividing the rank of $A$).
Then,  thanks mainly to the recent work of I. Badulescu, D. Renard and V. S\'echerre, if we define now representations $u(\s,m)$  and complementary series for general linear groups over $A$ putting   $|\det|_F^{s(\s)}$ instead of $|\det|_F$  in \eqref{eq-ind} and \eqref{eq-comp}, the above classification holds in the same form\footnote{Here representations $u(\s,m)$ are usually not in the automorphic dual (see \cite{Bad-Inv} and \cite{BR-arch}), but their unitarity easily follows using closely related automorphic representations (see \cite{BR-Tad})}
 for all groups $GL(n,A)$.

\end{enumerate}

The above classification of $\widehat{GL(n,A)}$ is obtained along the same strategy in the non-archimedean and archimedean case (the outline of that strategy is in \cite{T-Bul-naj}). This strategy reduces the classification to proving the unitarity of Speh representations, and  the irreducibility of unitary parabolic induction. We have described one possibility to prove the unitarity of Speh representations using complementary series (this strategy was used in \cite{T-AENS}, \cite{BHLS} and \cite{Bad-Speh}). In the following section we  comment on the possibility of proving unitarity of Speh representations  using automorphic forms (used first in \cite{Sp}, and then in \cite{T-AENS} and \cite{BR-Tad}; this approach does not distinguish  between the archimedean and non-archimedean cases). 
In the field case, the idea outlined in \cite{Ki} was a basis of proofs of irreducibility of unitary parabolic induction in \cite{Be-P-inv} and \cite{Bu} (see also \cite{AG}, \cite{SZ} and \cite{AGRS}).

\begin{remark}
\label{vo-cl}
\begin{enumerate}
\item
In the (non-commutative) division algebra case, D. Vogan  proved in \cite{Vo} the irreducibility of unitary parabolic induction    for Hamiltonian quaternions  
 (see \cite{BR-arch} for explanation how to get this irreducibility from \cite{Vo}) and  V. S\'echerre proved this irreducibility  in \cite{Se} for the case of non-archimedean division algebras (here we have plenty of division algebras). The two above proofs for division algebras are completely different. 
 A uniform proof for both cases would be very desirable (the idea of Kirillov from \cite{Ki} does not work here, neither can the approach of \cite{AG}, \cite{SZ} and \cite{AGRS}  be used here). A uniform proof in the division algebra case  would  also shed a new light on the field case. We expect such proof to be of a functional analytic nature.

\item
To complete this discussion, let us mention that there is the classification of D. Vogan in the archimedean case. His classification is given by Theorem 6.18 of \cite{Vo}, and is completely different from the classification that we have presented above. Stating his classification here would require a number of technical and combinatorial notions (and results), starting with several kinds of $K$-types. This is the reason that we do not present his classification here.
 His classification is equivalent to the
 specialization to the archimedean case of the classification that we presented above\footnote{Vogan's classification gives the complete answer in the archimedean case, while the classification that we presented above (holding in both cases) only reduces unitary duals to square integrable representations. Despite this, the above classification in the archimedean case  directly yields  the complete classification, since the required square integrable representations were known already in 1950's (and even earlier). In the non-archimedean case, this  was a very hard problem (see \cite{Z}, \cite{BK}, \cite{LaRpSt}, \cite{HT} and \cite{He}).}. 
 This equivalence is a non-trivial fact, and it is proved in \cite{BR-arch}.   
 \end{enumerate}

\end{remark}

\section{The automorphic dual}
\label{s-auto}
\setcounter{equation}{0}
\setcounter{footnote}{0}

\begin{definition}
\label{def-auto} 
Let $G$ be a reductive group defined over a number field $k$, $v$ a place of $k$,  $k_v$ the completion of $k$ at $v$ (which we often denote  by $F$), and $\mathbb A_k$ the ring of adeles of $k$. Then the automorphic dual 
$$
\hat G_{v,\text{aut}}
$$
is defined to be the support of the representation of $k_v$-rational points $G(k_v)$ of $G$ on the space of square integrable automorphic forms
$$
L^2(G(k)\backslash G(\mathbb A_k)).
$$
If an appropriate maximal compact subgroup $K$ of  $G(k_v)$ is fixed, we  denote by $\hat G_{v,\text{aut}}^{\mathbf 1}$ the representations in $\hat G_{v,\text{aut}}$ which possess a non-trivial vector for $K$ (i.e., the unramified part\footnote{The superscript $^{\mathbf  1}$ on a set of representations will always denote the unramified representations in that set.}). This part will be called the Ramanujan dual (as in \cite{BLS}).
\end{definition}

One can find the original (more general) definition, and much more detail in \cite{Cl-Park} (see also \cite{BLS}).

The  automorphic duals are very hard. They are  related to such  hard problems as  Selberg's $\frac 14$-conjecture, and more generally Ramanujan's conjecture for Maass forms, generalized Ramanujan conjecture, etc.
We do not know the classification of automorphic duals even in the simplest cases.
  Despite this, we know rather interesting facts for classical groups, as we shall see, and have more precise expectations than in the case of unitary duals. This may be good for both the duals.

H. Jacquet has proved in \cite{Jq} that Speh representations are automorphic (being local factors of global Speh representations, for which he proved that they are in the residual spectrum of the representation on the space of square integrable forms).
B. Speh has proved this earlier for $u(\s,m)$'s when $\s$ is a square integrable representation of  $GL(2,\mathbb R)$. Jacquet's proof clearly implies the unitarity of (local) Speh representations. The division algebra case requires an additional step (using unitary parabolic reduction, see \cite{BR-Tad}).

In this way, thanks to Theorem \ref{th-iso-gl} and H. Jacquet's result that we mentioned above, we get  plenty of isolated (modulo center) representations in the automorphic dual (although this term was not yet formally defined when the both results were available).

Further, the estimate in \cite{MulSp} of B. Speh and W. M\"uller of  local components of irreducible representations  in the residual  spectrum (which is based on earlier estimates of local unramified components of irreducible representations  in the cuspidal spectrum in \cite{LoRuSa}) and Langlands' description of automorphic spectra (\cite{A-Cor}) imply  that the Speh representations excluded by Theorem \ref{th-iso-gl} are also isolated in the automorphic dual.

Actually, the generalized Ramanujan conjecture (stating that each local  component of an irreducible representation  in the cuspidal spectrum of $GL(n)$ should be tempered), would imply that the Speh representations are the only isolated representations in the automorphic dual.

Moreover, the generalized Ramanujan conjecture also tells us what the automorphic dual should be. Let us briefly comment on this.
In J. Bernstein's work on unitarity \cite{Be-P-inv}, a notion of rigid  representations of $GL(n,F)$ naturally arose.  We can define these representations
as those  for which the essentially tempered representation $|\det|_F^{\a_1}\tau_1\o \dots\o |\det|_F^{\a_k}\tau_k$ (of a Levi subgroup) corresponding to it by the Langlands classification of the non-unitary duals ($\tau_i$ are tempered and $\a_i\in\mathbb R$) have all the exponents  $\a_i$ in $(1/2)\mathbb Z$. Denote by $\widehat{GL(n,F)}_\text{rig}$ the subset of rigid representations in $\widehat{GL(n,F)}.$ Then the classification theorem (\cite{T-AENS}, \cite{T-R-C-new}, or see our previous  brief description of the classification) implies that $\widehat{GL(n,F)}_\text{rig}$ consists of representations parabolically induced by tensor products of Speh representations (no complementary series). A. Venkatesh denotes $\widehat{GL(n,F)}_\text{rig}$ by $\widehat{GL(n,F)}_\text{Ar}$, since these representations come from the work of J. Arthur (\cite{A-unip}). 

A. Venkatesh observed in \cite{Ve} that the generalized Ramanujan conjecture would imply equality of the automorphic and the rigid duals, and conversely.
Namely, Langlands' description of automorphic spectra implies that $\widehat{GL(n,F)}_\text{rig}$ is contained in the automorphic dual, and further,
the generalized Ramanujan conjecture would imply equality. In the other direction, knowledge of equality at all places  would imply (using \cite{Shal}) the generalized Ramanujan conjecture.

One can find plenty of interesting facts, estimates and conjectures in \cite{Cl-Park} and \cite{Sa} (and papers  cited there).

\section{Unramified duals and Ramanujan duals of classical groups}
\label{s-unr}
\setcounter{equation}{0}
\setcounter{footnote}{0}

In this section, we  consider unramified irreducible unitary representations ($F$ is a non-archemedean field). For this section, it is  convenient to introduce  notion of  negative and  strongly negative representations. These terms were introduced by G. Mui\'c in \cite{Mu-no-un}. We have already mentioned the Casselman square integrability  criterion  (\cite{C-int}). His criterion for square integrability (resp., temperedness) modulo center, is given by some inequalities $<$ (resp.,  $\leq$). Reversing these  inequalities, one gets the definition of strongly negative (resp., negative) representations.
We do not go  into further detail here (see \cite{Mu-no-un} or Definition 1.1 in \cite{T-auto}). Irreducible negative (resp., strongly negative) representations are dual to the tempered (resp., square integrable) irreducible representations by duality of A.-M. Aubert  (\cite{Au}) and  P. Schneider and U. Stuhler (\cite{ScSt}). Unramified irreducible  negative representations are always unitary by \cite{Mu-unit} (we  comment on this later).
The simplest example of a strongly negative representation of $G$ is the trivial representation $1_G$.

We  first consider the simple example of $SL(n)$:

\begin{example}
\label{ex-sl}
Below, $k$ is an algebraic number field, $v$ a place of $k$ and $F=k_v$, the completion of $k$ at $v$. We have
\begin{enumerate}

\item 
$
\text{Isolated representations in }
\widehat{SL(n,F)}^{\mathbf 1}=
\begin{cases}
\{1_{SL(n,F)}\} & n\ne2,
\\
\emptyset & n=2.
\end{cases}
$

\item 
$
\text{Strongly negative representations in }
\widehat{SL(n,F)}^{\mathbf 1}=
\{
1_{SL(n,F)} 
\}.
$

\item
$
\text{Known (to us) isolated representations in }
\widehat{SL(n)}^{\mathbf 1}_{v,\text{aut}}=
\{
1_{SL(n,F)} 
\}.
$ 
\end{enumerate}

\end{example}

Therefore, the above three sets coincide, with one exception (the case $n=2$ in (1)).

Now we  discuss the case of other classical groups. To simplify exposition, we  concentrate on symplectic groups. Here we  use a classification of unramified unitary duals obtained in \cite{MuT}. 
By \cite{MuT}, an unramified irreducible unitary  representation is either negative, or a complementary series starting with a negative irreducible representation. Clearly, complementary series can not be isolated. Therefore, we are left with irreducible negative representations. Further by \cite{Mu-no-un}, each  negative representation is a subrepresentation of a representation parabolically induced by a strongly negative irreducible one. From this it follows easily that an isolated unramified representation must be 
strongly negative (recall that for $SL(n)$, $n\ne2$, the converse also holds).

Now we  describe the parameters of unramified  irreducible 
strongly negative representations of $Sp(2n,F)$. They  are  pairs of partitions $(p_1,p_2)$, where each $p_i$ is a partition of $k_i$ into different odd positive integers such that
\begin{enumerate}

\item[$\bullet$] $k_1+k_2=2n+1$;

\item[$\bullet$] $p_2$ has even number of terms (i.e., $p_1$ has odd number of terms)\footnote{These parameters are in bijection with discrete unramified  admissible homomorphism of the Weil-Deligne group - see \cite{Mu-unit}.}.

\end{enumerate} 
In  \cite{T-auto} it is described how one  constructs in a simple way representations attached to these parameters.

Further, by \cite{MuT} isolated representation in $ \widehat{Sp(2n,F)}^{\mathbf 1}$  are parameterized by pairs of partitions $(p_1,p_2)$, which satisfy the above two conditions, and also the following two:
\begin{enumerate}
\item[$\bullet$] neither $p_1$ nor $p_2$ contains  consecutive odd numbers;

\item[$\bullet$]  3 is neither in $p_1$ nor in $p_2$\footnote{We have similar description for the special odd-orthogonal groups, where we deal with partitions into different even positive integers}.

\end{enumerate}

 In \cite{T-auto}, we have obtained   combinatorial formulas for the number of above two classes of representations (i.e.,  irreducible unramified strongly negative and isolated ones). Instead of writing these formulas, we  write here the numbers that we get for  $Sp(340)$:

\begin{example}
\label{ex-sp}
 We have
\begin{enumerate}

\item 
$
\text{Number of isolated representations in }
\qquad \quad \ \ \ \widehat{Sp(340,F)}^{\mathbf 1}= \
 11\ 322\ 187\ 942.
$

\item 
$
\text{Number of strongly negative representations in }
\widehat{Sp(340,F)}^{\mathbf 1}=
568\ 385\ 730\ 874.
$

\item G. Mui\'c has proved in \cite{Mu-unit} that each unramified irreducible  strongly negative representation is automorphic (in this way he also proved their unitarity). Therefore, (1) provides us with a huge number of isolated representations in the Ramanujan dual. From this it follows that in the Ramanujan dual 
$\widehat{Sp(340)}^{\mathbf 1}_{v,\text{aut}}$ we have at least
$11\ 322\ 187\ 942
$   isolated representations (and this is the number of  isolated representations in $\widehat{Sp(340)}^{\mathbf 1}_{v,\text{aut}}$ known to us). 
\end{enumerate}
\end{example}

This example shows several new phenomena when compared to the $SL(n)$-case, and raises some questions. A new fact is that we have a huge number of isolated representations in the Ramanujan dual, and also in the unramified unitary dual; also we have a huge number of  unramified irreducible  strongly negative representations. Further new fact is that the number of unramified irreducible strongly negative representation is much bigger then the number of isolated unramified representations (and also then the number of known isolated representations in the Ramanujan dual).

It is interesting that Conjecture ``Arthur + $\epsilon$" of L. Clozel from \cite{Cl-Park} would imply that the set of isolated representations in  $\widehat{Sp(340)}^{\mathbf 1}_{v,\text{aut}}$ is equal to the set of strongly negative representations in $\widehat{Sp(340,F)}^{\mathbf 1}$ (see \cite{T-auto}).

The first question which arises related to the above example is of an arithmetic nature.
D. Kazhdan's proof in \cite{Ka} that the trivial representation is isolated in the unitary dual  of a simple group of rank different from 1  had important arithmetic consequences. Does the above stunning difference regarding the number of isolated points for $SL(n)$ and $Sp(2n)$ groups have some arithmetic explanation, or consequence?

The second question which arises is related to  harmonic analysis: why is such a  small portion of strongly negative representations  isolated (this was not the case for $SL(n)$)? The reason is that for each of $568\ 385\ 730\ 874$ strongly negative unramified representations, excluding $11\ 322\ 187\ 942$ of them (i.e., the isolated ones in the unramified dual), 
 there is a complementary series at whose end  this representation lies. As it is well known,  complementary series representations start with an irreducible representation parabolically induced from a unitary one (and which satisfies additional symmetry conditions with respect to the Weyl group). Therefore, in the example of $Sp(340,F)$, for the $557\ 063\ 542\ 932$ parabolically induced representations which are involved (where corresponding complementary series start),  we need to know their irreducibility.

The following  question is related: why are there still $11\ 322\ 187\ 942$   isolated representations? The answer is roughly: no  complementary series ends with them.

All this tells us that we need to have very explicit knowledge of complementary series (not only on some algorithmic level). This means that we need also to  have a very explicit understanding of the question of irreducibility/reducibility of parabolically induced representations from unramified unitary ones.
Such an understanding is obtained by G. Mui\'c in \cite{Mu-no-un}. Instead of explaining it here, we shall go to a different (and dual) setting in the following section, where we  explain how one can get  a similar  understanding.

Note that for general linear groups we have a perfect understanding of the question of irreducibility/reducibility of parabolically induced representations from  unitary ones. It is given  by J. Bernstein in \cite{Be-P-inv}. The answer is very simple: we always have  irreducibility.

For  other classical groups, the above answer is far from being true. Roughly, in the cases that we  consider and when  reducibility can happen, we  have reducibility in about  ``half" the cases. It  can be different from "half" and the portion for which the reducibility is different from "half" will be crucial information. We  try to explain this in the following section.

\section{Controlling tempered reducibility $\leadsto$ packets of square integrable representations}
\label{s-pack}
\setcounter{equation}{0}
\setcounter{footnote}{0}

We shall start this section  with some questions of harmonic analysis, and see how they bring us  to some deep questions of the theory of automorphic forms.
We  deal with classical groups like $Sp(2n,F)$ or (split) $SO(2n+1,F)$ ($F$ is a non-archimedean field). The main object of this section will be square integrable representations. Recall that they can be characterized as isolated representations in the tempered dual.

Suppose that we are interested in the unitary duals of these groups. Standard strategy  to classify them is to use the non-unitary duals. The non-unitary dual of a reductive group $G$ over $F$ is the set of all equivalence classes of irreducible smooth representations of $G$\footnote{Some authors use the term smooth dual or admissible dual for the non-unitary dual. We prefer (more traditional) term non-unitary dual. This term has been used more often earlier (among others, by J.~M.~G.~Fell in \cite{Fe2}). Here the term ``non-unitary" is used to stress that unitarity of representations is not required in the definition of this dual (the non-unitary dual contains the unitary dual).}. The second part of this strategy is to identify in the non-unitary dual unitarizable classes (i.e. irreducible smooth representations  which admit  $G$-invariant inner products).

  Langlands' classification of the non-unitary duals reduces the non-unitary duals to the tempered duals of Levi subgroups. Since a Levi subgroup in a classical group is a direct product of general linear groups and of a classical group, and since tempered duals of general linear groups are classified (we have commented  this earlier in this paper), the non-unitary duals are in this way  reduced to the tempered duals of classical groups.

Further, the theory of $R$-groups (\cite{Go}) reduces the problem of tempered duals of classical groups to the question of reducibility of the representations
\begin{equation}
\label{temp1}
\text{Ind}_P^G(\d\o\pi),
\end{equation}
 where $\d$ and $\pi$ are irreducible square integrable representations of a general linear group and a classical group respectively, and to the problem of  classification of irreducible square integrable representations of classical groups (in \eqref{temp1}, $G$ is an appropriate classical group, and $P$ is an appropriate parabolic subgroup of $G$)\footnote{Actually, for a more explicit understanding of tempered duals we would need a little bit more  than what the $R$-groups give (see \cite{Jn-temp}, \cite{T-temp} where this is obtained). We do not go  into  detail here.}. We  concentrate now on  the first question,  the question of reducibility of \eqref{temp1}.
We can have reducibility only if $\d$ is selfdual, i.e., if it is  equivalent to its own contragredient. Therefore, we  always assume    that $\d$ is selfdual in what follows.

Each  selfdual irreducible square integrable representation of a general linear group is of the form $\d(\rho,m)$ for some cuspidal selfdual representation $\rho$ and some positive integer $m$ (see  section \ref{s-gl} for notation). In what follows, we assume    that $\rho$ is selfdual. Therefore, we need to understand reducibility of
\begin{equation}
\label{temp2}
\text{Ind}_P^G(\d(\rho,m)\o\pi).
\end{equation}

To get an idea of how the reducibility of \eqref{temp2} behaves, we shall look at some simple examples. For this, one of our old results from \cite{T-irr}  will be useful. To state this result,  we need  to introduce some notation.

Let $\rho$ be irreducible cuspidal representation  of $GL(n,F)$ and $m$ a non-negative integer. Set 
$$
[\rho,|\det|_F^{m}\rho]=\{\rho,|\det|_F\rho,\dots,|\det|_F^{m}\rho\}.
$$
Then $\D=[\rho,|\det|_F^{m}\rho]$ is called a segment in cuspidal representations of general linear groups.
The representation
\begin{equation}
\label{seg}
\text{Ind}_P^{GL((m+1)n,F)}(|\det|_F^{m}\rho\o |\det|_F^{m-1}\rho\o\dots\o \rho)
\end{equation}
contains a unique irreducible subrepresentation, which is denoted by 
$$
\d(\D)
$$
 ($P$ is the appropriate standard  parabolic subgroup).
This is an essentially square integrable representation\footnote{I.e., it becomes square integrable after twisting by an appropriate character}, and  J. Bernstein has shown that one gets all such representations in this way. Note that our former notation (for $\rho$ unitary) becomes
\begin{equation}
\label{seg-cent}
\d(\rho,m)=\d([|\text{det}|_F^{-(m-1)/2}\rho,|\text{det}|_F^{(m-1)/2}\rho]).
\end{equation}
To simplify discussion, we  assume  char$(F)=0$ below. We  now recall   Theorem 13.2 from \cite{T-irr}:

\begin{theorem} \label{th-re-cus} Let $\d(\D)$ be an essentially square integrable representation of a general linear group and $\s$ an irreducible cuspidal representation of a classical group. Then
\begin{equation}
\label{eq-ind-se}
\text{Ind}_P^G(\d(\D)\o\s)
\end{equation}
reduces if and only if
\begin{equation}
\label{eq-ind-cu}
\text{Ind}_{P'}^{G'}(\rho\o\s)
\end{equation}
reduces for some $\rho\in \D$ ($G$ and $G'$ are  appropriate classical groups, and $P$ and $P'$ are appropriate parabolic subgroups in $G$ and $G'$, respectively).
\end{theorem}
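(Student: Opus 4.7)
The plan is to reduce the statement, via the Bernstein--Zelevinsky embedding of $\delta(\Delta)$ into a chain of cuspidals, to a successive application of cuspidal reducibility criteria for classical groups.

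First I would set up notation. Write $\Delta=[\nu^a\rho,\nu^b\rho]$ with $\nu=|\det|_F$ and $a\le b$. By definition $\delta(\Delta)$ is the unique irreducible subrepresentation of $\nu^b\rho\times\nu^{b-1}\rho\times\cdots\times\nu^a\rho$, so induction in stages yields an embedding
\[
\text{Ind}_P^G(\delta(\Delta)\otimes\sigma)
\hookrightarrow
\nu^b\rho\times\nu^{b-1}\rho\times\cdots\times\nu^a\rho\rtimes\sigma.
\]
Any two factors $\nu^j\rho$ and $\nu^k\rho$ with $|j-k|\ne1$ can be commuted freely since their product is then irreducible, a standard fact from the Bernstein--Zelevinsky theory for general linear groups.

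For the ``if'' direction, assume $\nu^i\rho\rtimes\sigma$ reduces for some $a\le i\le b$. Using the commutations just described I would slide $\nu^i\rho$ across the other cuspidal factors until it sits adjacent to $\sigma$, identifying the ambient induced representation with one induced from $\nu^i\rho\rtimes\sigma$ in the last slot. The reducibility of that small piece produces two non-isomorphic irreducible constituents of the big induced representation, and a Frobenius-reciprocity argument applied to the embedding above transfers this to two distinct constituents of $\delta(\Delta)\rtimes\sigma$, hence reducibility.

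For the ``only if'' direction, suppose every $\nu^i\rho\rtimes\sigma$ with $a\le i\le b$ is irreducible. The strategy is to show that $\delta(\Delta)\rtimes\sigma$ has a single irreducible constituent. I would compute the Jacquet module of $\delta(\Delta)\rtimes\sigma$ with respect to the parabolic whose Levi is $GL(n,F)^{b-a+1}\times G_0$, using the geometric lemma combined with Zelevinsky's formula for the Jacquet module of $\delta(\Delta)$. A hypothetical proper subrepresentation would contribute a distinguished summand to that Jacquet module; chasing the ``leading'' cuspidal exponent and applying the Langlands classification, one sees that any such summand would force a reducibility of some $\nu^i\rho\rtimes\sigma$, contradicting the hypothesis.

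The main obstacle is exactly this last step: ruling out subrepresentations that originate from interactions between the GL-factors in $\Delta$ and the classical group $G_0$ which are invisible at the level of the individual cuspidals $\rho'\rtimes\sigma$. The decisive input making the argument go through is the uniqueness, for fixed cuspidal $\rho$ and $\sigma$, of the non-negative real number $\alpha(\rho,\sigma)$ at which $\nu^\alpha\rho\rtimes\sigma$ reduces: once $\Delta$ avoids both $\alpha(\rho,\sigma)$ and $-\alpha(\rho,\sigma)$, every Langlands datum compatible with a subquotient of $\delta(\Delta)\rtimes\sigma$ must coincide with that of $\delta(\Delta)\rtimes\sigma$ itself, forcing irreducibility.
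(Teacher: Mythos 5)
Your sketch has genuine gaps at exactly the points where the real work lies, in both directions. For the ``if'' direction: in the chain $|\det|_F^{b}\rho\otimes|\det|_F^{b-1}\rho\otimes\cdots\otimes|\det|_F^{a}\rho\otimes\sigma$ the neighbours of $|\det|_F^{i}\rho$ have exponents $i\pm1$, and $|\det|_F^{i}\rho\times|\det|_F^{i\pm1}\rho$ is reducible (the two cuspidals are linked), so the ``free sliding'' you invoke is forbidden at the very first swap unless $i=a$; the commutation fact you quote applies only to non-adjacent exponents, which never occur consecutively in this chain. More seriously, even after rearranging (say in the Grothendieck group, where the order is irrelevant) so that $|\det|_F^{i}\rho\rtimes\sigma$ appears as an inner block, its reducibility only gives reducibility of the \emph{full} cuspidally induced representation, which is reducible anyway; it says nothing about the subrepresentation \eqref{eq-ind-se}. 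The sentence ``a Frobenius-reciprocity argument transfers this to two distinct constituents of $\delta(\Delta)\rtimes\sigma$'' is precisely the content of the theorem in this direction and is not supplied: one must show that both constituents of the rank-one induced representation actually propagate into the subrepresentation $\delta(\Delta)\rtimes\sigma$, and in \cite{T-irr} this is done by Jacquet-module multiplicity computations with the structure formula of \cite{T-Str}, producing two explicit non-isomorphic subquotients (typically a tempered or square-integrable one and a Langlands-type one), with the construction depending on where the reducibility exponent sits inside $\Delta$.

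For the ``only if'' direction, the key claim --- that once $\Delta$ avoids $\pm\alpha(\rho,\sigma)$ every Langlands datum compatible with a subquotient of $\delta(\Delta)\rtimes\sigma$ coincides with that of $\delta(\Delta)\rtimes\sigma$ --- is essentially a restatement of the theorem, not a consequence of the uniqueness of the reducibility point: the geometric lemma produces Jacquet-module terms in which an initial piece of $\Delta$ is flipped to its contragredient with negated exponents, and excluding the corresponding candidate Langlands data is exactly the hard combinatorial/multiplicity analysis. Moreover the argument cannot even start in the central case where $\Delta$ is centered, i.e.\ $\delta(\Delta)=\delta(\rho,m)$ is selfdual and $\delta(\rho,m)\rtimes\sigma$ is unitarily induced from a discrete series: then every irreducible subquotient is tempered with the same cuspidal support, so there is no ``leading exponent''/Langlands-classification dichotomy to exploit, and this is precisely the case the paper uses (the parity statements for \eqref{temp2}); it requires multiplicity-one Jacquet-module arguments or Plancherel-measure/$R$-group input instead. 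Finally, note that the theorem was proved in \cite{T-irr} only under a condition on the cuspidal reducibility exponents (supplied by \cite{Sh1}, \cite{Sh2} for generic $\sigma$ and by \cite{A-book} in general, e.g.\ that these exponents lie in $(1/2)\mathbb{Z}$); your argument never uses anything of this nature, which is a further sign that the sketch bypasses, rather than resolves, the actual difficulties.
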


The above theorem was proved under a condition  in \cite{T-irr} (which was fulfilled if $\s$ is generic thanks to the fundamental results of \cite{Sh1}; see also \cite{Sh2}). This condition now follows from \cite{A-book}. Note that this book is still conditional, but this is expected to be removed very soon (when the facts on which  \cite{A-book} relies become available). Therefore, we shall use  \cite{A-book} in what follows  without mentioning that there is still a piece to be completed. 

In what follows, $\rho$ will always be an irreducible selfdual cuspidal representation of a general linear group and $\pi$ will be always an irreducible square integrable representation of a classical group (a series of classical groups will be fixed).
Below we shall fix $\rho$ and $\pi$, and consider the reducibility of \eqref{temp2} depending on $m$. We   first look  at some very simple examples. 

The first example  gives a very regular picture.
We shall consider (for a moment) special odd-orthogonal groups. The simplest setting is if we take for the
square integrable representation $\pi$ the trivial representation $1_{SO(1,F)}$ of the  trivial group, and for $\rho$ the trivial representation  $1_{F^\t}$ of $GL(1,F)$.

\begin{example}
\label{ex-triv}
 Recall that $\text{Ind}_{P_\emptyset}^{SO(3)}(|\ |_F^\a)=\text{Ind}_{P_\emptyset}^{SO(3)}(|\ |_F^\a\o 1_{SO(1,F)})$, $\a\in \mathbb R$, reduces  $\iff \a=\pm1/2$. Now Theorem \ref{th-re-cus} implies
$$
\text{Ind}_{P}^{SO(2m+1,F)}(\d(1_{F^\t},m)\o1_{SO(1,F)}) \text{ \ is \  }
\begin{cases}
\text{\ reducible for all even $m$},
\\
\text{\ irreducible for all odd $m$}, 
\end{cases}
$$
since $
|\ |_F^{\pm1/2}\in [\ |\ |_F^{-(m-1)/2},|\ |_F^{(m-1)/2}] \iff m\text { is even}
$
($P$ is the Siegel parabolic subgroup above).
\end{example}

We  now go to the symplectic counterpart, where we  get a slightly less regular picture.

\begin{example}
\label{ex-triv+}
Theorem \ref{th-re-cus} gives
$$
\text{Ind}_P^{Sp(2m,F)}(\d(1_{F^\t},m)\o1_{Sp(0,F)}) \text{ \quad is \quad  }
\begin{cases}
\text{\ reducible for all odd $m$ {\bf except} $m=1$},
\\
\text{\ irreducible for all even $m$} ,
\end{cases}
$$
since  $|\ |_F^{\pm1}\in [\ |\ |_F^{-(m-1)/2},|\ |_F^{(m-1)/2}] \iff m\text{ is odd and } m\ne1$ (reducibility of principal series  of $SL(2,F)$ that we consider is at $\pm1$).
\end{example}
Therefore, $\boxed{1_{F^\t}=\d(1_{F^\t},1)}$ is an exception among the representations $\d(1_{F^\t},m)$.

\begin{remark}
\label{rm-Shah}
F. Shahidi had proved in \cite{Sh2}   that if $\rho\not\cong 1_{F^\t}$, and if  
$
\text{Ind}_{P}^G(|\text{det}|_F^\a\rho \o 1)
$
 reduces, then either $a=0$ or $\a=\pm1/2$.
Now Theorem \ref{th-re-cus}, together with a general reduction to the computation in ``cuspidal lines" from \cite{Jn-sup}, implies
$$
\text{Ind}_{P}^{G}(\d(\rho,m)\o1) \text{ \quad is \quad  }
\begin{cases}
\text{\ reducible for all  $m$ from one parity,}
\\
\text{\ irreducible for all  $m$ from the other parity} .
\end{cases}
$$

\end{remark}

The parity of reducibility is even (resp., odd) if  Shahidi's reducibility is $1/2$ (resp., $0$).
We do not further  discuss here  parity of reducibility/irreducibility (see \cite{Sh2}), but it is related to the local Langlands correspondence for general linear groups (or analytic properties of corresponding $L$-functions). Clearly, the reducibility/irreducibility depends on the series of classical groups that we consider (F. Shahidi has obtained a duality among these reducibilities for the series of groups that we consider; see \cite{Sh2}).

 Observe that the picture that we have gotten from  two last examples is pretty nice. We have  only one exception which does not fit the general even - odd pattern.
Actually, this exception is the first  of a family of examples, when one takes for $\pi$ the Steinberg representation $St_{Sp(2k,F)}$ of $Sp(2k,F)$. 
For $\pi=St_{Sp(2k,F)}$, we can not get  the reducibility of \eqref{temp2} from Theorem \ref{th-re-cus}, but one can get it for example, using some simple principles from \cite{T-irr} based on Jacquet modules (which were used in the proof of Theorem \ref{th-re-cus})\footnote{For dealing with Jacquet modules, the structure obtained in \cite{T-Str} simplifies considerations.}.  We get

\begin{example}
\label{ex-3}  
$$
\text{Ind}_P^{Sp(2(m+k),F)}(\d(1_{F^\t},m)\o St_{Sp(2k,F)}) \text{  is    }
\begin{cases}
\text{\ reducible for all odd $m$ {\bf except} $m=2k+1$},
\\
\text{\ irreducible for all even $m$} .
\end{cases}
$$
\end{example}
So, among representations $\d(1_{F^\t},m)$, $\boxed{\d(1_{F^\t},2k+1)}$ is an exception.
For $\rho\not\cong 1_{F^\t}$ we  have the same situation as in Remark \ref{rm-Shah}, i.e., there are no exceptions.

In general, for general square integrable $\pi$, there can be more than one exception. For example, let $\psi$ be a character of order two of $F^\t$. Then 
\begin{equation}
\label{eq-ord2}
\text{Ind}_{P_\emptyset}^{Sp(4,F)}(|\ |_F\psi\o\psi)
\end{equation}
 contains precisely two irreducible square integrable subquotients. For each of them, the set of exceptional representations (as above) is
\begin{equation}
\label{eq-ord2jb}
\{\d(1_{F^\t},1),\d(\psi,1),\d(\psi,3)\}
\end{equation}
(so they show up for $\rho=1_{F^\t}$ and for $\rho=\psi$). 

For general square integrable $\pi$, the set of such exceptional representations in the above sense  (with respect to $\pi$) is denoted by\footnote{Here we use a slightly different notation than in  \cite{Moe-T} and the other papers, where elements of $\text{Jord}(\pi)$ were pairs $(\rho,m)$ instead of square integrable representations $\d(\rho,m)$  (recall  that  $(\rho,m)\leftrightarrow \d(\rho,m)$ is a bijection by \cite{Z}). One can find  the original definition of C. M\oe glin in \cite{Moe-Ex}.}
$$
\text{Jord}(\pi)\footnote{Roughly, the set of  "singularities" which happen in tempered induction related to $\pi$ is just $\text{Jord}(\pi)$.}.
$$
In other words: 

\begin{definition}
\label{def-jord}
 For an irreducible square integrable representation $\pi$ of a classical group,
 $\text{\rm Jord}(
\pi)$ is the set of all  selfdual  irreducible square integrable representations $\d(\rho,m)$ of general linear groups such that 
$$
\text{Ind}_{P'}^{G'}(\d(\rho, m) \o \pi)
$$
 is irreducible, and  
 $$
\text{Ind}_{P''}^{G''}(\d(\rho, m+2k) \o \pi)
$$
 reduces for some positive integer $k$.
 \end{definition}
 
Above,  $G'$ and $G''$ are appropriate classical groups, while $P' $ and $P''$ are appropriate parabolic subgroups of $G'$ and $G''$ respectively.

After the above definition of $\text{Jord}(\pi)$ we can describe the reducibility of \eqref{temp2} in the following way:

\begin{enumerate}
\item If $\d(\rho,m')\in \text{Jord}(\pi)$ for some $\rho$ and $m'$, then we have reducibility of \eqref{temp2} for $m$'s of that parity, excluding $\d(\rho,m)$'s from $ \text{Jord}(\pi)$, and we have irreducibility in the other parity. 

\item For the remaining $\rho$'s (not showing up in $\text{Jord}(\pi)$), we have reducibility in one parity and irreducibility in the other parity. The parity does not depend on $\pi$. The reducibility parity is even if and only if $\text{Ind}_{P}^G(|\det|_F^{1/2}\rho\o1)$ reduces ($P$ is the Siegel parabolic subgroup).

\end{enumerate}

Therefore, it is crucial  to know $\text{Jord}(\pi)$.

\begin{remark}
\label{rm-cus-p}
Theorem \ref{th-re-cus} directly implies the following three consequences for an  irreducible {\bf cuspidal} representation $\pi$:

\begin{enumerate}
\item
\begin{equation}
\label{eq-no-gap}
\d(\rho,m)\in \text{\rm Jord}(\pi), m\geq 3 \implies \d(\rho,m-2)\in \text{\rm Jord}(\pi).
\end{equation}

\item
We can read from $\text{\rm Jord}(\pi$)   the cuspidal reducibility points which are  different from $0,\pm1/2$. Namely if $\{m;\d(\rho,m)\in \text{\rm Jord}(\pi)\}\ne \emptyset$, let 
$$
a_{\rho,\pi}=\max \{m;\d(\rho,m)\in \text{\rm Jord}(\pi)\}.
$$
Then
\begin{equation}
\label{eq-jb-red}
\text{Ind}_P^G(|\det|_F^{(a_{\rho,\pi}+1)/2}\rho\o\pi) \text{ reduces}.
\end{equation}

\item
We can read information the other way, i.e., if $\text{Ind}_P^G(|\det|_F^{x}\rho\o\pi)$ reduces for some $x\in (1/2)\Z$, $x\geq 1$, then 
\begin{equation}
\label{eq-red-j}
\d(\rho,2x-1), \d(\rho,2x-3), \dots, \d(\rho,\epsilon) \in \text{Jord}(\pi),
\end{equation}
where $\epsilon=1$(resp., 2) if $x$ is an integer (resp.,  not an integer).
 Further,  these are the only members of the form $\d(\rho,m)$ in $\text{\rm Jord}(\pi)$. 

\end{enumerate}
\end{remark}

Therefore, for cuspidal $\pi$, knowing the cuspidal reducibilities $\geq 1$  (which are in $(1/2)\Z$)\footnote{This is always the case by the recent results of J. Arthur, C. M\oe glin and J.-L. Waldspurger.} is equivalent to knowing the Jordan blocks of $\pi$. Other cuspidal reducibilities (i.e., those  at 0 and 1/2) do not depend on $\pi$, but only on the series of groups (and clearly on $\rho$).

In the drawing below, $x$ is the cuspidal reducibility exponent, as in (3) above (in our example below, $x$ is integral), bold segments represent $\d(\rho,m)$'s in the Jordan blocks (since for them $[|\det|_F^{-(m-1)/2}\rho, |\det|_F^{(m-1)/2}\rho]$   does not contain $|\det|_F^{x}\rho$; see \eqref{seg-cent}), and dashed segments represent $\d(\rho,m)$'s which give reducibility (since they contain $|\det|_F^{x}\rho$), and therefore   are not  in the Jordan blocks of $\pi$:

\hskip20mm
\begin{tikzpicture} %[scale=0.5]
\draw[line width=2pt] (5,6) -- (7,6);
\draw[line width=2pt] (4,5) -- (8,5);
\draw[line width=2pt] (3,4) -- (9,4);
\draw[line width=1pt] (0,3) -- (12,3);
\draw[style=dashed,line width=2pt] (2,2) -- (10,2);
\draw[style=dashed,line width=2pt] (1,1) -- (11,1);
\draw[style=dashed] (0,0) -- (12,0);
\draw (10,3) node[below] {\ \ \ \ $x$};
\draw (6,3) node[below] {\ \ \ \ $0$};
\shade[shading=ball, ball color=black] (6,7) circle (.1);
\shade[shading=ball, ball color=black] (7,6) circle (.1);
\shade[shading=ball, ball color=black] (5,6) circle (.1);
\shade[shading=ball, ball color=black] (8,5) circle (.1);
\shade[shading=ball, ball color=black] (4,5) circle (.1);
\shade[shading=ball, ball color=black] (9,4) circle (.1);
\shade[shading=ball, ball color=black] (3,4) circle (.1);
\shade[shading=ball, ball color=black] (6,3) circle (.1);
\shade[shading=ball, ball color=white] (10,3) circle (.15);
\shade[shading=ball, ball color=black] (10,2) circle (.1);
\shade[shading=ball, ball color=black] (2,2) circle (.1);
\shade[shading=ball, ball color=black] (11,1) circle (.1);
\shade[shading=ball, ball color=black] (1,1) circle (.1);
 \end{tikzpicture}
 
 The above drawing is a graphical interpretation of Remark \ref{rm-cus-p}.  The cuspidal reducibility point (exponent) $x$ determines which segments are ``bold" (i.e., belong to $\text{Jord}(\pi)$), and also conversely: if ``bold" segments are given (i.e., $\text{Jord}(\pi)$ is given), then we can read from this where  the cuspidal reducibility point $x$ is placed.

\begin{definition}
Jordan blocks (of not necessarily cuspidal representations)  which satisfy \eqref{eq-no-gap} will be called Jordan blocks without gaps.
\end{definition}

\begin{remark}
\begin{enumerate}
\item
Observe that (1) of Remark \ref{rm-cus-p} above tells us that Jordan blocks of cuspidal representations do not have gaps.

\item Theorem \ref{th-re-cus} and \cite{Sh1} imply that Jordan blocks of cuspidal generic representations consist only of cuspidal representations (since the cuspidal reducibilities in this case can be only in  $\{0,\pm 1/2,\pm1\}$ by \cite{Sh1}; see the above drawing).

\end{enumerate}
\end{remark}

Up to now in this section, our point of view was completely that  of  harmonic analysis. 
From the other side, square integrable representations  
are important for a number of problems in  automorphic forms. One of the problems is to determine  the local Langlands correspondence for the irreducible square integrable representations that we considered. 

Briefly,
 the  local Langlands correspondence (for the cases that interest us)
  attaches  to an irreducible square integrable representation $\pi$ of a
split connected semi-simple group $G$ over $F$,
a conjugacy class   of continuous
homomorphisms  
$$
\varphi: W_F\times SL(2,\mathbb C)\rightarrow \ ^LG^0,
$$
 such that $\varphi$ maps elements of the first factor to  semi simple elements, it is
algebraic on the second factor, 
and its image is not contained in any proper Levi subgroup of $^LG^0$ (such homomorphisms are called discrete admissible homomorphisms). Above,   $W_F$ denotes the Weil
group of $F$ and $^LG^0$  the complex  dual Langlands $L$-group. The mapping which sends $\pi$ to $\varphi$ will be denoted by
$\Phi_G$, or simply by $\Phi$. Dual Langlands  $L$-groups that interest us are 
$$
\aligned
^L Sp(2n,F)^0
&=SO(2n+1,\mathbb C) ,
\\
^L SO(2n+1,F)^0
&=Sp(2n,\mathbb C).
\endaligned
$$
 Further,  elements with the same admissible homomorphism $\varphi$ should be parameterized  by the equivalence classes of
irreducible representations of the  component  group
\[
\text{Cent}_{^LG^0}(\text{Im}(\varphi))/\text{Cent}_{^LG^0}(\text{Im}(\varphi))^0\ Z(^LG^0)
\]
(Cent$_{^LG^0} X$  denotes the centralizer of  $X\subseteq \,  ^L G^0$,  (Cent$_{^LG^0} X)^0$
 the connected component of  the identity and $Z(^LG^0)$ the center of $^LG^0$). For classical groups, component groups are commutative. Therefore, their irreducible representations are characters.
We shall also use local Langlands correspondences for general linear groups, where 
$$
^L GL(n,F)^0=GL(n,\mathbb
C)
$$
 and where  component groups are trivial.

Recall that to square integrable representation $\pi$ of a classical group,  we have explained how to attach the set $\text{Jord}(\pi)$ of square integrable representations of general linear groups. For general linear groups, the local Langlands correspondences are   known for a while (they were established by G. Laumon, M. Rapoport and U. Stuhler in the positive characteristic, and by M. Harris and R. Taylor  and by G. Henniart in the characteristic 0). Therefore,  we can try to apply it to $\text{Jord}(\pi)$, and see what we get.

Let us go to  some  examples. Consider $\pi=St_{Sp(2k,F)}$, for which we have observed that $\text{Jord}(\pi)=\{\d(1_{F^\t},2k+1)\}$. Applying the local Langlands correspondence $\Phi$ for general linear groups to the only term of $\text{Jord}(\pi)$, we get an admissible homomorphism
$$
1_{W_F}\o E_{2k+1}: W_F\t SL(2,\mathbb C)\rightarrow GL(2k+1,\mathbb C),
$$
where $E_{2k+1}$ denotes the irreducible $2k+1$-dimensional algebraic representation of $SL(2,\mathbb C)$.
Since irreducible algebraic odd-dimensional representations of $SL(2,\mathbb C)$ are orthogonal, the above admissible homomorphism actually  goes into
$$
1_{W_F}\o E_{2k+1}: W_F\t SL(2,\mathbb C)\rightarrow SO(2k+1,\mathbb C).
$$
Note that this is exactly where the admissible homomorphism given by the local Langlands correspondence for symplectic groups should go. 

Consider now irreducible square integrable subquotients of $\eqref{eq-ord2}$. The admissible homomorphism corresponding to them should go to $SO(5,\mathbb C)$. The Jordan blocks here are given by \eqref{eq-ord2jb}. Observe that neither of the homomorphisms  $\Phi(\d(1_{F^\t},1)),\Phi(\d(\psi,1)),\Phi(\d(\psi,3))$ goes into $SO(5,\mathbb C)$. On the other side, their direct sum 
$$
\Phi(\d(1_{F^\t},1))\oplus \Phi(\d(\psi,1))\oplus\Phi(\d(\psi,3))
$$
goes into $SO(5,\mathbb C)$.
Actually, in the third section of \cite{Mu-IMRN}, G. Mui\'c has conjectured that the admissible homomorphism corresponding to an irreducible square integrable generic representation $\pi$ should be the direct sum
$
\oplus\ \Phi(\s),
$
where the sum runs over $\s\in \text{Jord}(\pi)$.

In Theorem 1.5.1 of \cite{A-book}, J. Arthur has obtained  a classification of irreducible square integrable representations of classical groups  (actually, he has obtained classification of tempered representations, but it is easy to single out the square integrable ones).
He has attached to an irreducible square integrable representation $\pi$ of a classical group a pair of an admissible homomorphism and a character of the component group of that homomorphism. 

A fundamental result of C. M\oe glin (Theorem 1.3.1 of \cite{Moe-mult-}) is the following: 

\begin{theorem}
\label{th-Moe}
 The admissible homomorphism that J. Arthur has attached to square integrable representation  $\pi$ is 
\begin{equation}
\label{eq-sum-L}
\underset{\s\in \text{Jord}(\pi)}\oplus\Phi(\s).
\end{equation} 
\end{theorem}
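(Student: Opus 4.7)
The plan is to show that the set of irreducible summands of $\varphi_\pi := \Phi_G(\pi)$ coincides with $\text{Jord}(\pi)$, from which the direct sum decomposition in \eqref{eq-sum-L} follows because Arthur's parameter for a discrete series is multiplicity free.

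First I would record the combinatorial shape of Arthur's parameter. By his classification of discrete series for $G \in \{Sp(2n,F), SO(2n+1,F)\}$, the parameter $\varphi_\pi$ is a multiplicity free direct sum of irreducible admissible homomorphisms $\Phi_{GL}(\rho_i)\otimes E_{m_i}$, each of the self-duality type dictated by $^LG^0$ (orthogonal if $^LG^0 = SO(2n+1,\mathbb C)$ and symplectic if $^LG^0 = Sp(2n,\mathbb C)$). Using Remark \ref{rm-Shah} and the interpretation of the Shahidi reducibility point, having the correct self-duality type is equivalent to $m_i$ having the reducibility parity relative to $\rho_i$; thus the summands of $\varphi_\pi$ are a priori candidates to lie in $\text{Jord}(\pi)$.

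The crux of the proof is an R-group criterion extracted from \cite{A-book}: for $\rho$ a self-dual unitary cuspidal representation of a general linear group and $m$ in the reducibility parity for $\rho$,
\[
\text{Ind}(\delta(\rho,m)\otimes\pi) \ \text{is irreducible} \ \iff \ \Phi(\rho)\otimes E_m \ \text{is a summand of} \ \varphi_\pi.
\]
This follows from computing the R-group of the induced representation via the component group of the inflated parameter $\varphi_\pi \oplus 2(\Phi(\rho)\otimes E_m)$: when $\Phi(\rho)\otimes E_m$ is absent from $\varphi_\pi$, the new doubled right-type summand contributes a nontrivial $\mathbb Z/2$ to the component group and forces reducibility; when it is already present, the doubling is absorbed into an existing $GL$-factor of the centralizer, giving a trivial R-group and irreducibility.

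The theorem is then a direct comparison with Definition \ref{def-jord}. If $\Phi(\rho)\otimes E_m$ is a summand of $\varphi_\pi$, the criterion gives irreducibility at $m$, and since $m+2k\neq m$ and $\varphi_\pi$ is multiplicity free, the criterion also gives reducibility at $m+2k$; hence $\delta(\rho,m)\in \text{Jord}(\pi)$. Conversely, membership in $\text{Jord}(\pi)$ forces $m$ to have the reducibility parity (so $\Phi(\rho)\otimes E_m$ is of the right type), and the criterion promotes the irreducibility at $m$ to the summand condition. The main obstacle is the R-group criterion itself: while Shahidi's work yields its generic case, the statement for arbitrary non-generic square integrable $\pi$ is substantive output of Arthur's endoscopic classification, resting on the stabilization of the twisted trace formula and the compatibility of the endoscopic character identities of \cite{A-book} with parabolic induction.
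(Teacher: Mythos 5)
The paper itself offers no proof of this statement: it is quoted verbatim as a theorem of C.~M\oe glin (Theorem 1.3.1 of \cite{Moe-mult-}), so there is no in-paper argument to compare yours against. That said, your route --- identify the irreducible summands of Arthur's parameter with $\text{Jord}(\pi)$ by means of the reducibility criterion ``$\text{Ind}(\d(\rho,m)\o\pi)$ is irreducible iff $\Phi(\rho)\o E_m$ occurs in $\varphi_\pi$ (for $m$ of the relevant parity)'' --- is faithful to how the cited result is actually established, the criterion being the content of Arthur's identification of the Knapp--Stein $R$-group with a quotient of component groups (the local intertwining relation in \cite{A-book}), combined with the compatibility, via Remark \ref{rm-Shah} and Shahidi's work, of the reducibility parity with the orthogonal/symplectic type of $\Phi(\rho)\o E_m$. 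Given that criterion, your comparison with Definition \ref{def-jord} is the right bookkeeping.

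Two points need repair, though. First, your justification of reducibility at $m+2k$ ``since $m+2k\ne m$ and $\varphi_\pi$ is multiplicity free'' does not work: multiplicity-freeness only rules out a repeat of $\Phi(\rho)\o E_m$, and $\Phi(\rho)\o E_{m+2k}$ may perfectly well be a summand of $\varphi_\pi$ for particular $k$ (e.g.\ when both $\d(\rho,1)$ and $\d(\rho,3)$ are Jordan blocks), giving irreducibility there. Since Definition \ref{def-jord} only demands reducibility for \emph{some} $k>0$, the correct argument is to take $k$ so large that $\dim\bigl(\Phi(\rho)\o E_{m+2k}\bigr)>\dim\varphi_\pi$, so it cannot occur, and then invoke the criterion in the right-parity case. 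Second, your component-group gloss in the ``already present'' case is inaccurate: adding the two induced copies to an existing summand produces an isotypic space of dimension $3$ carrying an orthogonal form, i.e.\ an $O(3,\mathbb C)$-factor of the centralizer whose extra symmetry lies in the \emph{connected} component --- not a ``$GL$-factor'' --- and that is why the $R$-group is trivial. More importantly, this criterion for arbitrary (possibly non-generic) square integrable $\pi$ is exactly the substantive input supplied by \cite{A-book} and \cite{Moe-mult-}; your sketch is acceptable as a reduction to those sources, but it should cite them for the criterion rather than present its derivation as done, since proving it is essentially equivalent in depth to the theorem itself.
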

The above homomorphism is discrete, i.e., its image is not contained in any proper Levi subgroup of the (complex) Langlands dual group. Using the above formula, it is equivalent to work with discrete admissible homomorphisms  and Jordan blocks. In what follows, we work with Jordan blocks.

In other words, one of the parameters by which J. Arthur classifies square integrable representations gives  crucial information for  tempered induction in a simple way.
Roughly, we can define a packet as the representations which have the same tempered reducibility properties. Despite the same tempered reducibility,  the representations can be  pretty different  (see the examples in   section \ref{examples}). 

We  end this section (in which we were considering square integrable representations, which can be characterized as isolated points in the tempered duals) with a short note about the Speh representations (which are isolated representations in the automorphic duals). 
Note that we  have not mentioned Speh representations in this section up to now. Nevertheless, they play a significant  role in  Arthur's book \cite{A-book}, which is crucial  to us in this   and the following sections. 
The formula expressing   Speh representation in terms of standard modules plays important role there (see page 427 of \cite{A-book})\footnote{This formula is also important for the global (and the local) Jacquet-Langlands correspondences (which are  instances of functoriality; see \cite{Bad-Inv}, \cite{BR-arch} and \cite{T-div-a2})}. The formula is surprisingly simple, and we briefly present it  here. 

\begin{remark}
\label{Speh}
 The Speh representation $u(\d,m)$ is the unique quotient of \eqref{eq-ind}. Write each tensor factor of the inducing representation in \eqref{eq-ind} as
$$
|\det|_F^{(m-1)/2+k-1}\s=\d\big(\big[|\det|_F^{b_k}\rho,|\det|_F^{e_k}\rho\big]\big), \quad k=1,\dots,m,
$$
where $\rho$ is an  irreducible unitary cuspidal representation. Then in the Grothendieck group of the category of smooth representations we have
\begin{equation}
\label{ch-Speh}
u(\s,m)=\text{det}\left(\left[ \d([|\det|_F^{b_i}\rho,|\det|_F^{e_j}\rho]) \right]_{1\leq i,j\leq m}\right),
\end{equation}
with additional convention that if $b_i=e_j+1$ (resp., $b_i>e_j+1$), we drop the corresponding term $\d([|\det|_F^{b_i}\rho,|\det|_F^{e_j}\rho])$ (resp., we take it to be 0). The multiplication showing up in the determinant is given by  parabolic induction (see \cite{T-ch}, \cite{CR}, \cite{Bad-Speh} or \cite{LMi} for more details)\footnote{The above formula  directly gives an expression for each irreducible unitary representation of a general linear group in terms of standard modules}.
\end{remark}

A formula equivalent to the above one was obtained in \cite{T-ch}. The above simple interpretation is observed in \cite{CR}, which opened way for substantial simplifications of the proof (in \cite{CR}, and in particular in \cite{Bad-Speh}), and a substantial   generalization in \cite{LMi} to the completely  non-unitary setting (to the ladder representations; see \cite{LMi} for the definition). It is interesting to note that this very recent non-unitary generalization has shown already to be very useful even in the unitary setting. Namely, it gives explicit formulas for the derivatives and  Jacquet modules of irreducible unitary representations  in a simple way .

\section{Cuspidals} 
\label{s-inside}
\setcounter{equation}{0}
\setcounter{footnote}{0}

We have briefly discussed Arthur's classification (from  \cite{A-book}) of irreducible square integrable representations of classical groups in the last section.
For a number of questions, it is important to understand what exactly happens in the packets and how the tempered, square integrable and cuspidal representations are related. In the case of general linear groups, all this is solved by the Bernstein-Zelevinsky theory (see \cite{B-Z} and \cite{Z};  some of the main results are mentioned in the previous part of this paper).
For  other classical groups we have   very briefly discussed the relation between tempered and square integrable  representations in the last section.

We shall concentrate here  on the relation  between irreducible cuspidal and square integrable  representations.  A classification of irreducible square integrable representations modulo cuspidal data  is obtained in \cite{Moe-Ex} and \cite{Moe-T}, i.e., modulo cuspidal representations and cuspidal reducibilities (which is equivalent to the knowledge of Jordan blocks by (2) and (3) of Remark \ref{rm-cus-p}). We do not go  into details of this classification here (which can be found in \cite{Moe-Ex} and \cite{Moe-T}). Let us say only  that C. M\oe glin has attached to an irreducible square integrable representation $\pi$ a triple
\begin{equation}
\label{eq-triple}
(\text{Jord}(\pi),\epsilon_\pi,\pi_{cusp}).
\end{equation}
We have defined $\text{Jord}(\pi)$ in the previous section.
The partial cuspidal support $\pi_{cusp}$ is defined as an (equivalence class of) irreducible cuspidal representation(s) of a classical group such that there exists a representation $\theta$ of a general linear group so that $\pi\h\text{Ind}_P^G(\theta\o\pi_{cusp})$. We do not go here into  the definition of the third invariant, a partially defined function $\epsilon_\pi$ (see \cite{Moe-Ex} or \cite{Moe-T} or the introduction of \cite{T-inv}).

These triples satisfy certain conditions (short overview can be found in the introduction of \cite{T-inv}), and triples satisfying these conditions are called admissible triples. Now admissible triples classify irreducible square integrable representations of the series of classical groups that we consider. 
We do not go here into  the definition of admissible triples. Let us only    note  that admissible triples are purely combinatorial objects modulo cuspidal data. Therefore, if in  Arthur's classification we can single out cuspidal representations, this will not only imply the classification of cuspidal representations, but also give cuspidal reducibilities and further imply  an understanding of square integrable representations in term of cuspidal representations (giving in this way an understanding of the internal structure of packets). We  now explain how C. M\oe glin has described the cuspidal representations in  Arthur's classification.

Consider  an irreducible square integrable representation $\pi$ of a classical group. Then  the admissible homomorphism corresponding to $\pi$ is given by \eqref{eq-sum-L}. For $\s \in \text{Jord}(\pi)$, denote by $z_\s$ the linear mapping on the space of \eqref{eq-sum-L}, acting  on the space of $\s$ as the scalar $-1$, and as the identity on the   spaces of $\s'$ for all other $\s'\in \text{Jord}(\pi)$. We  very often use the identification 
\begin{equation}
\label{eq-ident}
\s\leftrightarrow z_\s.
\end{equation}

Now, we  relate the component group and its characters to $\text{Jord}(\pi)$.
If we consider 
$SO(2m+1,F)$ (resp.,  
$Sp(2m,F)$), then
the centralizer in $Sp(2m,
\mathbb C)$ (resp., in $O(2m+1,\mathbb C)$) of the image of \eqref{eq-sum-L}  is a multiplicative group  consisting of elements 
\begin{equation}
\label{additiv}
\prod_{\s \in \text{Jord}(\pi)} z_\s^{a_\s},
\end{equation}
 where $a_\s\in \{0,1\}$. Further, such $a_\s$'s are uniquely determined by \eqref{additiv}. Therefore, we  identify  \eqref{additiv} with the formal (commutative) product
 \begin{equation}
\label{additiv+}
\prod_{\s \in \text{Jord}(\pi),\, a_\s=1} \s.
\end{equation}
Observe that \eqref{additiv+} determines a subset of $\text{Jord}(\pi)$  in an obvious way. In this way, the above centralizer is in a natural bijection with the set  
$2^{\text{Jord}(\pi)}$ of all subsets of $\text{Jord}(\pi)$ (it is an isomorphism when we consider  the operation of symmetric difference on $2^{\text{Jord}(\pi)}$). This is the reason that we shall denote the centralizer by 
$$
2^{\text{Jord}(\pi)}.
$$
Observe that the characters of $2^{\text{Jord}(\pi)}$ are in a natural  bijection with all the functions  $\text{Jord}(\pi)\rightarrow \{\pm1\}$.

 If we consider the special odd-orthogonal group $SO(2m+1,F)$, then $\Phi(\s)$'s in \eqref{eq-sum-L} are symplectic. 
Further, the component group is the quotient of $2^{\text{Jord}(\pi)}$
by the subgroup $\{1, \prod_{\s \in \text{Jord}(\pi)}\s\}$.
Therefore, the characters of the component group can be identified with the characters  of $2^{\text{Jord}(\pi)}$ which are trivial on 
\begin{equation}
\label{so}
\prod_{\s \in \text{Jord}(\pi)} \s.
\end{equation}

Now consider  $Sp(2m,F)$. Then $\Phi(\s)$'s in \eqref{eq-sum-L} are orthogonal. 
The component group is the subgroup of $2^{\text{Jord}(\pi)}$ of all elements of determinant one,
i.e., it consists of all $Y\subseteq \text{Jord}(\pi)$
satisfying 
\begin{equation}
\label{sp}
\prod_{\s \in Y}\text{det}(z_\s)=1.
\end{equation} 
This implies  that if $\d(\rho,2)\in \text{Jord}(\pi)$ (resp., $\d(\rho,a), \d(\rho,b)\in \text{Jord}(\pi)$), then 
$\d(\rho,2)$ 
(resp., $\d(\rho,a) \d(\rho,b))$
 is in the component group  (we deal here with symplectic groups). 
 
 Therefore for both series of groups, after the above identification of characters of the component group, if $\d(\rho,2)\in \text{Jord}(\pi)$ (resp., $\d(\rho,a), \d(\rho,b)\in \text{Jord}(\pi)$), then we can evaluate the characters on elements 
 \begin{equation}
\label{eq-in-cg}
\text{
$\d(\rho,2)$ 
\quad 
(resp., $\d(\rho,a) \d(\rho,b))$.
}
\end{equation}
Observe that for both series of groups, the component group (which we write multiplicatively) is isomorphic in a natural way to a vector space over $\Z/2\Z$. Therefore,  we can talk about a basis of the component group (or $2^{\text{Jord}(\pi)}$),  having in mind this vector space structure.
 
Let $\d(\rho,a)\in \text{Jord}(\pi)$. We define 
$$
a_-=\max \{b; \d(\rho,b)\in \text{Jord}(\pi), b<a\}
$$
if $\{b; \d(\rho,b)\in \text{Jord}(\pi), b<a\}\ne \emptyset$ (otherwise, $a_-$ is not defined).

\begin{definition}
\label{cus-ch}
A character $\varphi $ of the component group corresponding to $\pi$ will be called cuspidal\footnote{C. M\oe glin uses term alternate. Since the same the term is used in \cite{Moe-Ex} and \cite{Moe-T} (in a slightly  different setting), we have rather chosen  the term cuspidal (used by G. Lusztig).}, if it holds
\begin{enumerate}
\item $\text{\rm Jord}(\pi)$ is without gaps;
\item $\varphi(\d(\rho,2))=-1$ whenever $\d(\rho,2)\in \text{\rm Jord}(\pi)$;
\item $\varphi(\d(\rho,a)\d(\rho,a_-))=-1$ whenever $\d(\rho,a)\in \text{\rm Jord}(\pi)$ and $a_-$ is defined.
\end{enumerate}

\end{definition}

Now, Theorem 1.5.1 of \cite{Moe-mult-} tells

\begin{theorem}[C. M\oe glin]
\label{th-Moe-cusp}
An irreducible square integrable representation $\pi$ is cuspidal if and only if $\text{Jord}(\pi)$ is without gaps, and if the  character  of the component group corresponding to $\pi$ is cuspidal.
\end{theorem}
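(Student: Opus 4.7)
The plan is to combine M\oe glin's classification of irreducible square integrable representations via admissible triples $(\jp, \ep, \pc)$ from \cite{Moe-Ex} and \cite{Moe-T} with the compatibility between her invariant $\ep$ and Arthur's character of the component group. Under this compatibility, both directions of the equivalence become combinatorial assertions about the triple, which can be checked by inspecting whether $\pi$ admits an embedding into a representation parabolically induced from a proper Levi of the form $GL \times G'$.

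For the forward direction, assume $\pi$ is cuspidal. Then Remark \ref{rm-cus-p}(1) directly yields condition (1): $\jp$ has no gaps. For conditions (2) and (3), I would argue by contrapositive, using the fact that $\pi$ cuspidal means $\pi$ cannot embed into any $\text{Ind}(\theta \o \pi')$ with $\theta$ a non-trivial representation of a general linear factor. Concretely, if $\varphi(\d(\rho,2)) = +1$ for some $\d(\rho,2) \in \jp$, M\oe glin's construction of square integrable representations in a packet produces an irreducible $\pi'$ with $\text{Jord}(\pi') = \jp \setminus \{\d(\rho,2)\}$ and restricted character, together with an embedding $\pi \h \text{Ind}(\rho \o \pi')$; similarly, if $\varphi(\d(\rho,a)\d(\rho,a_-)) = +1$ with $a_-$ defined, the no-gap structure of $\jp$ enables the construction of $\pi'$ with $\text{Jord}(\pi') = \jp \setminus \{\d(\rho,a),\d(\rho,a_-)\}$, realizing an embedding $\pi \h \text{Ind}(\d([|\det|_F^{(a_-+1)/2}\rho, |\det|_F^{(a-1)/2}\rho]) \o \pi')$. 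Either embedding contradicts cuspidality.

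For the backward direction, assume conditions (1)--(3) hold and suppose, for contradiction, that $\pi$ is not cuspidal. Then $\pi$ embeds in some $\text{Ind}(\theta \o \pi^\star)$ with $\theta$ non-trivial on a GL-factor and $\pi^\star$ irreducible on a smaller classical group. Replacing $\theta$ by an essentially square integrable constituent if needed and using M\oe glin's description of Jacquet modules of representations in square integrable packets in terms of the triple, I would show that the existence of such an embedding forces one of three situations: a gap in $\jp$ (contradicting (1)), an element $\d(\rho,2) \in \jp$ with $\varphi(\d(\rho,2)) = +1$ (contradicting (2)), or a pair $\d(\rho,a), \d(\rho,a_-) \in \jp$ with $\varphi(\d(\rho,a)\d(\rho,a_-)) = +1$ (contradicting (3)).

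The main obstacle is establishing the precise dictionary between Arthur's character of the component group and M\oe glin's combinatorial invariant $\ep$ attached to an admissible triple; this is what allows the abstract packet-theoretic condition on $\varphi$ to be translated into the explicit parabolic embeddings above. Once that dictionary is in place (it is essentially the content of the main theorem of \cite{Moe-mult-}, building on the classification by triples in \cite{Moe-Ex} and \cite{Moe-T}), both implications reduce to the construction and non-construction of such embeddings sketched above.
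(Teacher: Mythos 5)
The paper itself offers no proof of this statement: it is quoted directly as Theorem 1.5.1 of \cite{Moe-mult-}, so there is no in-paper argument to compare your proposal against. Judged on its own terms, your outline correctly locates the result inside the M\oe glin--Tadi\'c framework, but it contains a genuine gap: the ``dictionary'' you defer to the end --- the identification of Arthur's character of the component group with the M\oe glin--Tadi\'c invariant $\epsilon_\pi$, together with the characterization of cuspidal representations among admissible triples --- is not an auxiliary compatibility around which one then does easy combinatorics; it is the substance of the theorem being proved. Once one knows that $\varphi$ computes $\epsilon_\pi$ and that the triples with gapless Jordan blocks and alternating $\epsilon$ are exactly those of cuspidal representations, the statement is immediate, so your plan in effect reduces the theorem to itself (equivalently, defers it wholesale to \cite{Moe-mult-}). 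In particular the backward direction, where you say you ``would show'' that any embedding of a non-cuspidal $\pi$ forces a gap or a $+1$ value of the character, is precisely the exhaustion part of the classification in \cite{Moe-Ex} and \cite{Moe-T}; that is where all the Jacquet-module combinatorics lives, and it cannot simply be asserted.

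Two smaller points. The forward direction is sound in spirit (cuspidality excludes any embedding into a properly parabolically induced representation, and Remark \ref{rm-cus-p}(1) gives the no-gap condition), but your embedding in the case $\varphi(\d(\rho,2))=+1$ is misstated: the correct statement (cf.\ Lemma \ref{dodatak}) is $\pi\h \text{Ind}(|\det|_F^{1/2}\rho\o\pi')$, with the half-integral twist. An embedding $\pi\h\text{Ind}(\rho\o\pi')$ with $\rho$ unitary is impossible for a square integrable $\pi$ by Casselman's criterion, so as written you derive your contradiction from a representation that never exists. Note also that even the no-gap step is not free of deep input: Remark \ref{rm-cus-p} rests on Theorem \ref{th-re-cus}, whose unconditional form uses \cite{A-book}.
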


Recall that the above result is again about isolated representations, since (as we have already mentioned) irreducible cuspidal representations can be characterized as the ones that are isolated in the non-unitary dual (see \cite{T-Geo}).

\section{Inside  packets}
\label{examples}
\setcounter{equation}{0}
\setcounter{footnote}{0}

If $\s_1,\dots,\s_k$ are nonequivalent   
irreducible square integrable representations of general linear groups such that all $\Phi(\s_1),\dots,\Phi(\s_k)$ are symplectic, then $\Phi(\s_1)\oplus\dots\oplus\Phi(\s_k)$ is also symplectic.

Let $\s_1,\dots,\s_k$ be nonequivalent   irreducible square integrable representations of general linear groups. Suppose that all $\Phi(\s_1),\dots,\Phi(\s_k)$ are orthogonal. Then $\Phi(\s_1)\oplus\dots\oplus\Phi(\s_k)$ is orthogonal. We want to know when the image goes into the special orthogonal group.
 Write $\s_i=\d(\rho_i,n_i)$. Then
$\Phi(\s_i)=\Phi(\rho_i)\o E_{n_i}$ (recall that by $E_{n_i}$ we denote the irreducible $n_i$-dimensional  algebraic representation of $SL(2,\mathbb C)$). Further, $\Phi(\rho_i)$ is orthogonal if and only if $n_i$ is odd (and $\Phi(\rho_i)$ is symplectic if and only if $n_i$ is even). If $\Phi(\rho_i)$ is symplectic, then $\Phi(\rho_i)\o E_{n_i}$ goes into the special orthogonal group. For orthogonal $\Phi(\rho_i)$, one has   $\text{det}(\Phi(\rho_i)\o E_{n_i})=\text{det}(\Phi(\rho_i)).$ By the properties of $\Phi$, we know $\text{det}(\Phi(\rho_i))=\Phi(\omega_{\rho_1})$. Denote by 
$$
X
$$
 the set of all $i$ such that $\Phi(\rho_i)$ is orthogonal. Therefore, $\Phi(\s_1)\oplus\dots\oplus\Phi(\s_k)$ goes into the special orthogonal group if and only if $\prod_{i\in X} \Phi(\omega_{\rho_i})\equiv 1$, which is equivalent to $\Phi(\prod_{i\in X} \omega_{\rho_i})\equiv 1$, which is further  equivalent to 
\begin{equation}
\label{prod}
\prod_{i\in X}\omega_{\rho_i}\equiv 1.
\end{equation}

The next question that we  try to explain is how to get  the structure of a general packet. A packet is determined by its Jordan blocks (recall \eqref{eq-sum-L}). Therefore, in the symplectic (resp., orthogonal) case, we choose a finite set of non-equivalent orthogonal
(resp., symplectic) irreducible square integrable representations of general linear groups, which we denote by
$$
\text{Jord}.
$$
 In the symplectic case, we also require  that    condition  \eqref{prod}  be satisfied. It is easy to write all the characters of the component group. We  fix one such character (see the previous section), and denote  it by
 $$
 \varphi.
 $$
  Now to get elements of the packet, it is enough to know how to attach the corresponding square integrable representation to a character. We  do this following \cite{Moe-T}
  \footnote{There is no general reference at the moment that the representation that we  attach is the same as the one that Arthur attaches. C. M\oe glin has a proof in \cite{Moe-Pac} for the unitary groups.}.
  The representation that will be attached   recursively in this way to the pair
  $(\text{Jord},\varphi)$ below,
   will be denoted by
  $$
   \lambda_{\text{Jord},\varphi}.
  $$
  
{\bf Recursive construction}:  
\begin{enumerate}  

\item  If the character $\varphi$ is cuspidal (see Definition \ref{cus-ch}), then we take $\lambda_{\text{Jord},\varphi}$  to be the cuspidal representation attached by Arthur to $(\text{Jord},\varphi)$ (see Theorem \ref{th-Moe}). 

{\it In what follows, we assume that $\varphi$ is not cuspidal.}
  
  \item
Suppose that there exists some $\d(\rho,a)\in \text{Jord}$ for which  $a_-$ is defined, and also that 
 \begin{equation}
 \label{=}
 \varphi(\d(\rho,a)\d(\rho,a_-))=1.
 \end{equation}
 Set $\text{Jord}'=\text{Jord}\backslash \{\d(\rho,a),\d(\rho,a_-)\}.$ The character $\varphi$  defines a character of $\text{Jord}'$ in a natural way (by restriction), which we denote by $\varphi'$. Denote by $\pi'$ the square integrable representation attached recursively to $(\text{Jord}',\varphi')$, i.e., $\pi'=\lambda_{\text{Jord}',\varphi'}$. Now the representation
  $$
  \text{Ind}_P^G(\delta([ |\text{det}|_F^{-(a_- -1)/2} \rho, 
|\text{det}|_F^{(a-1)/2}\rho])\o \pi')
$$
has precisely 2 irreducible subrepresentations. Denote them by $\pi_1$ and $\pi_2$. They are not equivalent, and one of them corresponds to $\varphi$. We need to specify which one. 

\begin{enumerate}
\item
Suppose that there exists  $\d(\rho,b)\in \text{Jord}'$ such that  $b_-=a$ ($b_-$ is considered with respect to $\text{Jord}$). We attach to $(\text{Jord},\varphi)$ the representation $\pi_i$ which embeds\footnote{Instead of the embedding requirement here and below, thanks to \cite{Jn-temp} or \cite{T-temp} we can use  a Jacquet module requirement (in this case, the Jacquet module requirement is that $\delta([ |\text{det}|_F^{(a -1)/2+1} \rho, 
|\text{det}|_F^{(b-1)/2}\rho])\o \tau$ is a subquotient of the appropriate Jacquet module of $\pi_i$).} into a representation of the form 
$$
\text{Ind}_P^G(\delta([ |\text{det}|_F^{(a -1)/2+1} \rho, 
|\text{det}|_F^{(b-1)/2}\rho])\o \tau)
$$
if and only if $\varphi(\d(\rho,b)\d(\rho,a))=1$. 

\item 
Suppose that there exists  $\d(\rho,b)\in \text{Jord}'$ such that  $b=(a_-)_-$ ($(a_-)_-$ is considered with respect to $\text{Jord}$). We attach to $(\text{Jord},\varphi)$ the representation $\pi_i$ which embeds 
into a representation of the form 
$$
\text{Ind}_P^G(\delta([ |\text{det}|_F^{(b -1)/2+1} \rho, 
|\text{det}|_F^{(a_--1)/2}\rho])\o \tau)
$$
if and only if $\varphi(\d(\rho,b)\d(\rho,a_-))=1$.  

\medskip

{\it It remains to consider the case when we have no $\d(\rho,b)$ in $\text{\rm Jord}'$.}

\medskip

\item
If $a$ is even, then we attach to $(\text{Jord},\varphi)$ the representation $\pi_i$ which embeds into a representation of the form 
$$
\text{Ind}_P^G(\delta([ |\text{det}|_F^{1/2} \rho, 
|\text{det}|_F^{(a_--1)/2}\rho])\o \tau)
$$
if and only if $\varphi(\d(\rho,a))=1$.  

\item
Suppose that $a$ is odd. Then $\text{Ind}_{P'}^{G'}(\rho\o\pi_{cusp})$ reduces. In \cite{Moe-Ex} and \cite{Moe-T} an  indexing of the irreducible subrepresentations is fixed: $\text{Ind}_{P'}^{G'}(\rho\o\pi_{cusp})=\tau_1\oplus \tau_{-1}$ (when $\pi_{cusp}$ is generic, we shall always take $\tau_1$ to be generic). We attach to $(\text{Jord},\varphi)$ the representation $\pi_i$ determined by the fact that it embeds into  
$$
\text{Ind}_P^G(\theta\o\delta([ |\text{det}|_F \rho, 
|\text{det}|_F^{(a-1)/2}\rho])\o \tau_1)
$$
for some irreducible representation $\theta$ of a general linear group,
if and only if $\varphi(\d(\rho,a))=1$. 

\end{enumerate}

\medskip

{\it In what follows, we can assume that  \eqref{=} does  not occur for $\varphi$.}

\medskip

\item
Suppose that $\text{Jord}$ has gaps. Then there exists $\d(\rho,a)\in \text{Jord}$, $a\ge3$ and $k\in\Z_{>0}$ such that $\d(\rho,b)\not\in \text{Jord}$ for any $b\in[a-2k,a-2]$. Set $\text{Jord}'=\text{Jord}\backslash \{\d(\rho,a)\}\cup \{\d(\rho,a-2k)\}.$ Define the character $\varphi'$ of the component group of $\text{Jord}'$ in a natural way from $\varphi$  (putting $\d(\rho,a-2k)$ instead of $\d(\rho,a)$). Let $\pi'$ be the representation recursively attached to $(\text{Jord}',\varphi')$. Then the representation 
$$
  \text{Ind}_P^G(\delta([ |\text{det}|_F^{(a -2k+1)/2} \rho, 
|\text{det}|_F^{(a-1)/2}\rho])\o \pi')
$$
contains a unique irreducible subrepresentation. This subrepresentation is square integrable, and we attach this subrepresentation to $(\text{Jord},\varphi)$ (this is compatible\footnote{This means that restricting $\varphi$, we get the partially defined function attached to the representation in \cite{Moe-Ex}} with the construction in \cite{Moe-T} by Theorem 8.2 of \cite{T-temp}, or by Corollary 2.1.3 of \cite{Jn-temp}).

\medskip 

{\it In what follows, we can assume that $\text{\rm Jord}$ has no gaps.}

\medskip

\item
The only possibility which remains is that we have some $\d(\rho,2)\in \text{Jord}$ such that $\varphi(\d(\rho,2))=1$. Set $\text{Jord}'=\text{Jord}\backslash \{\d(\rho,2)\}.$ Define the character $\varphi'$ of the component group of $\text{Jord}'$ in a natural way from $\varphi$ (restricting). Let $\pi'$ be the representation recursively attached to $(\text{Jord}',\varphi')$. Then the representation
$$
\text{Ind}_P^G( |\text{det}|_F^{1/2} \rho\o\pi')
$$
has a unique irreducible subrepresentation. This subrepresentation is square integrable, and we attach this subrepresentation to $(\text{Jord},\varphi)$ (this is compatible with the construction in \cite{Moe-T}  by  Lemma \ref{dodatak} in the appendix of this paper, or by Lemma 3.2.1 of \cite{Jn-temp}).

\end{enumerate}

To illustrate what  
packets look like, we   consider some very simple examples, first in the case of
 special odd-orthogonal groups, and later in the case of
 symplectic groups.   We 
 pay special attention to the
 packets simultaneously containing both cuspidal representations as well as  representations supported by the minimal parabolic subgroup\footnote{i.e., which are subquotients of  representations parabolically induced from the minimal parabolic subgroups.}. Such packets will be called packets with antipodes (cuspidal representations in such packets have  simple parameters, and simple cuspidal reducibilities $\geq 1$).

Below, we  call packets containing an Iwahori-spherical representation simply Iwahori packets (clearly, in such a  packet Iwahori-spherical representations are precisely the ones supported by the minimal parabolic subgroup).

The  simple examples below are presented  to give a flavor of what can happen in the packets: we can have representations supported on a number of different parabolic subgroups (often including a relatively small number of cuspidal representations when Jordan blocks are without gaps),  and we can have also packets containing only cuspidal representations.

\bigskip
\begin{example} 
\label{ex-ort}
{\rm In this example  we consider packets  for special odd-orthogonal groups}. 
\end{example}
\begin{enumerate}
\item Let $\psi_1,\psi_2$ be different characters of $F^\t$ satisfying $\psi^2_i\equiv 1$ for $i=1,2$. 
Consider the packet determined by  Jordan blocks  
$$
\text{Jord}=\{\d(\psi_1,2),\ \d(\psi_2,2)\}.
$$
Obviously, the Jordan blocks are without gaps.
We  shall write a character of the  centralizer $2^{\text{Jord}}$ as 
 $$
 \varphi_{(\epsilon_1,\epsilon_2)},
 $$
  which means that this character sends the first element of the basis to $\epsilon_1$ and the second one to $\epsilon_2$ (we shall use this notation for characters in what follows, whenever a basis is fixed).
  If a character $\varphi_{(\epsilon_1,\epsilon_2)}$ of  $2^{\text{Jord}}$ is a character 
of the component group, it must be trivial on $\d(\psi_1,2) \d(\psi_2,2)$.
  Therefore, we have  two characters of the component group.
One of them, $\varphi_{(-1,-1)}$,  is a cuspidal character.   
The corresponding cuspidal representation we have denoted by 
$$
\lambda_{\{\d(\psi_1,2),\d(\psi_2,2)\},\varphi_{(-1,-1)}}
$$
(here we  apply step (1) of the recursive construction).

It remains to consider the trivial character $\varphi_{(1,1)}$. To construct the corresponding representation, we   apply step (4) of the recursive construction two times.
In this way, we get that the corresponding representation is the unique irreducible subrepresentation of
$$
\text{Ind}_{P_\emptyset}^{SO(5,F)}(|\ |^{1/2}_F\psi_1\o|\ |^{1/2}_F\psi_2),
$$
which is also the unique irreducible square integrable subquotient of the 
above representation.

\item Let $\psi_1,\psi_2,\psi_3$ be different characters of $F^\t$ satisfying $\psi^2_i\equiv 1$ for $i=1,2,3$.  Consider the packet determined by Jordan blocks  
$$
\d(\psi_1,2), \ \d(\psi_2,2), \ \d(\psi_3,2).
$$
 Evidently, Jordan blocks  are without gaps but we do not have   cuspidal characters, since the character $\varphi_{(-1,-1,-1)}$ is not trivial on $\d(\psi_1,2)\d(\psi_2,2)\d(\psi_3,2)$.

 One gets three elements of the packet corresponding to the non-trivial characters in the following way (applying step (4) of Recursive construction once for each of these representations).  Choose $i\in\{1,2,3\}$,
and denote remaining two indices  by $j_{i,1}$ and $j_{i,2}$. Now
\begin{equation}
\label{2}
\text{Ind}^{SO(7,F)}_P(|\ |^{1/2}_F\psi_i\o \lambda_{\{\d(\psi_{j_{i,1}},2),\d(\psi_{j_{i,2}},2)\},\varphi_{(-1,-1)}})
\end{equation}
contains a unique irreducible square integrable subquotient.
One gets the character corresponding to \eqref{2}  from $\varphi_{(-1,-1,-1)}$ by putting  $1$ instead of $-1$ at $i$-th place.

 Applying step (4) of the recursive construction, and the previous example, we get that an irreducible
  square integrable  subquotient of 
  \begin{equation}
\label{1}
\text{Ind}_{P_\emptyset}^{SO(7,F)}(|\ |^{1/2}_F\psi_1\o|\ |^{1/2}_F\psi_2\o|\ |^{1/2}_F\psi_3)
\end{equation}
 corresponds to the trivial character $\varphi_{(1,1,1)}$. 
  Further, \eqref{1}
 contains a unique irreducible square integrable subquotient (it is the unique irreducible subrepresentation).

\item Take irreducible cuspidal  representations  $\rho_1,\dots,\rho_k$  of general linear groups. Suppose that all $\Phi(\rho_1),\dots, \Phi(\rho_k)$ are symplectic and non-equivalent. 
Then $\Phi(\rho_1)\oplus\dots\oplus \Phi(\rho_k)$ is a discrete admissible homomorphism. The corresponding set of Jordan blocks is $\{\rho_1,\dots,\rho_k\}$. Observe that these Jordan blocks do not have gaps, and each character of the component group is cuspidal. Therefore, here one  gets  a packet consisting of $2^{k-1}$ cuspidal representations (here we  apply only step (1) of the recursive construction).

\item If a packet (of $SO(2n+1,F)$) contains a representation supported by the minimal parabolic subgroup, then each  element in the Jordan blocks of such packet has the form $\d(\psi,2k$) for some character $\psi$ of $F^\t$ satisfying $\psi^2\equiv 1$, and some $k\in \Z_{\geq 1}$. The converse also holds (consider the trivial character of the component group).

\item
We  now consider a more general packet then the one in (1). Let $\psi_1,\psi_2$ be different characters of $F^\t$ satisfying $\psi_1^2\equiv 1$, $i=1,2$, and $k$ a positive integer. Consider  the packet of $SO(2k(k+1)+1,F)$    determined by the Jordan blocks
 \begin{equation}
 \label{jord-a}
\{\d(\psi_1,2i), \d(\psi_2,2i); i=1,2,\dots ,k\}.
\end{equation}
This packet  obviously has one cuspidal representation. This cuspidal representation is not generic, which  follows from \cite{Sh2} since the above representation has two cuspidal reducibilities $> 1$ (they correspond to $\psi_1$ and $\psi_2$;  both reducibilities are at  $k+1/2$).

\item Suppose that $SO(2n+1,F)$ has an Iwahori packet with antipodes.  
Then the fact that Jordan blocks of packets containing cuspidal representations have no gaps, implies that   
 $2n=k_1(k_1+1)+k_2(k_2+1)$ for some $k_1,k_2\in \mathbb Z_{\geq 0}$ (i.e., $n$ is a sum of two triangular numbers). Such a packet without gaps contains a cuspidal representation if and only if $\lfloor(k_1+1)/2\rfloor+\lfloor(k_2+1)/2\rfloor\in 2\Z$, where here $\lfloor x\rfloor $ denotes the largest integer not exceeding
  $x$.

\end{enumerate}

Very often we shall consider a slightly different problem then the above one: for a given  irreducible square integrable representation $\pi$, describe the remaining representations of the  packet to which  $\pi$ belongs. To solve this, one possibility  
is to find the Jordan blocks of $\pi$, and apply the recursive construction.  Proposition 2.1 
of \cite{Moe-T} is  useful for this (see also Proposition 3.1 of \cite{T-temp}).
We do not go  into the details here.

\begin{example}
\label{ex-symp}
{\rm In this example we consider packets  for symplectic groups.}
\end{example}
\begin{enumerate}

\item 
We  consider the packet given by Jordan blocks 
$$
\text{Jord}=\{\d(1_{F^\t},1), \d(1_{F^\t},3), \d(1_{F^\t},5)\}.
$$
For a basis $B$ of the component group we can take 
$$
\d(1_{F^\t},1)\d(1_{F^\t},3),\ \d(1_{F^\t},3)\d(1_{F^\t},5).
$$
As before, we write characters of the component group as $\varphi_{(\epsilon_1,\epsilon_2)}$, which means that this character sends the first element of the basis to $\epsilon_1$ and the second one to $\epsilon_2$.

To attach  representations to the characters $\varphi_{(\pm1,1)}$, we apply step (2) of the recursive construction: the representation $\text{Ind}_{P_\emptyset}^{Sp(8,F)}(\d([|\ |_F^{-1},|\  |_F^{2}])\o 1_{Sp(0,F)})$ has two irreducible subrepresentations. They are square integrable. 
Only one of them has a subquotient of the form $|\ |_F\o *$  in its Jacquet module. The character corresponding to this one is $\varphi_{(1,1)}$, while $\varphi_{(-1,1)}$ corresponds to the other subrepresentation (here, to attach characters to representations, we have applied (b) of step (2) in the recursive construction).

To attach  representations to the characters $\varphi_{(1,\pm1)}$, we consider the representation $\text{Ind}_{P}^{Sp(8,F)}(\d([|\  |_F^0,|\  |_F])\o St_{Sp(4,F)})$. This representation has two irreducible subrepresentations (here we have applied step (2) and then step (3) of the recursive construction). They are square integrable.  Only one of them does not have subquotient of the form $|\ |_F^2\o *$ in its 
Jacquet module. 
The character corresponding to this one is $\varphi_{(1,-1)}$ (here, to attach character to the representation, we have applied (a) of step (2) in the recursive construction).

 The fourth representation in the packet corresponds to the character $\varphi_{(-1,-1)}$. This character is cuspidal, so the corresponding representation is cuspidal (here we apply step (1) of the recursive construction). We  comment on this representation later. 
  
 Observe that the principal series 
$$
\text{Ind}_{P_\emptyset}^{Sp(8,F)}(|\ |_F^{-1}\o|\ |_F^{0}\o|\ |_F^{1}\o|\ |_F^{2})
$$
has exactly 3 irreducible non-equivalent square integrable subquotients.

\item The following example is related to \eqref{eq-ord2jb}.
We shall consider the packet determined by Jordan blocks  given by $\d(1_{F^\t},1),\d(\psi,1),\d(\psi,3)$ (i.e., by \eqref{eq-ord2jb}).
For the basis $B$ of the component group, we can take 
$$
\d(1_{F^\t},1)\d(\psi,1), \ \d(\psi,1)\d(\psi,3).
$$
Now the square integrable subquotients of \eqref{eq-ord2} correspond to the characters $\varphi_{(\pm1,1)}$ by step (2) of the recursive construction ($\varphi_{(1,1)}$ corresponds to the generic one by  (d) of step (2) of the recursive construction).
 The cuspidal characters are $\varphi_{(\pm1,-1)}$. Thus, here we have  two cuspidal representations in the packet (we apply step (1) of the recursive construction).

\item  Let $\rho$ be an irreducible selfdual cuspidal representation of $GL(2,F)$ with trivial central character\footnote{For this and the following example, we could take $\rho$ to be any irreducible  cuspidal representation of $GL(k,F)$ such that $\Phi(\rho)$ is symplectic.}.   
We now consider  the packet given by Jordan blocks  
$$
\d(1_{F^\t},1),\ \d(\rho,2).
$$
 For the basis $B$ of the component group, we can take 
 $$
 \d(\rho,2).
 $$
 Then $\text{Ind}^{Sp(4,F)}_P(|\text{det}|_F^{1/2}\rho)$ contains precisely one irreducible square integrable  subquotient. It is the  unique irreducible subrepresentation.
 The above square integrable representation corresponds to the trivial character (by step (3) of the recursive construction).
 
  There is one cuspidal character, denoted by $\varphi_{(-1)}$ (it sends $\d(\rho,2)$ to $-1$).  The corresponding cuspidal representation is denoted by 
$
\lambda_{\{\d(\rho,2)\},\varphi_{(-1)}}
$
 (this is step (1) of the recursive construction). Then
\begin{equation}
\label{eq-ex-1}
\text{Ind}^{Sp(8,F)}(|\text{det}|_F^{3/2}\rho\o \lambda_{\d(\rho,2),\varphi_{(-1)}})
\end{equation}
reduces.

\item Let $\rho$ be an irreducible selfdual cuspidal representation of $GL(2,F)$ with trivial central character (as in (3)).  Consider Jordan blocks  
$$
\d(1_{F^\t},1),\ \d(\rho,2),\ \d(\rho,4).
$$
 For the basis $B$ of the component group, we can take 
 $$
 \d(\rho,2),\ \d(\rho,4).
 $$
 Then $\text{Ind}^{Sp(12,F)}(\d([|\text{det}|_F^{-1/2}\rho, |\text{det}|_F^{3/2}\rho]))$ contains precisely two irreducible square integrable subquotients. Moreover, they are the  unique irreducible subrepresentations. They
  correspond to the characters $\varphi_{(1,1)}$ and $\varphi_{(-1,-1)}$. Only one of these representations has a subquotient of the form $|\text{det}|_F^{1/2}\rho\o*$ in its Jacquet module, and this one corresponds to $\varphi_{(1,1)}$ (here we have applied step (2) of the recursive construction, and then (c) in that step). 
  
   Further $\varphi_{(-1,1)}$ is the only cuspidal character here. It corresponds to a cuspidal representation (here we apply step (1) of the recursive construction). 
   
   The fourth element of the packet, corresponding to the character $\varphi_{(1,-1)}$, is the unique irreducible subrepresentation of 
\begin{equation}
\label{eq-ex-2}
\text{Ind}_P^{Sp(12,F)}(|\text{det}|_F^{1/2}\rho\o |\text{det}|_F^{3/2}\rho\o \lambda_{\{\d(\rho,2)\},\varphi_{(-1)}}).
\end{equation}
This representation is not only square integrable, but moreover strongly positive (in the terminology of \cite{Moe-T}). 
Here we have first applied step (4) of the recursive construction, then step (3) which brought us to the cuspidal character, where we apply step (1).

\item Let $\rho_1,\dots,\rho_l$ be non-equivalent   irreducible cuspidal representations of general linear groups, such that all $\Phi(\rho_1),\dots,\Phi(\rho_l)$ are orthogonal and $\prod_{i=1}^l\omega_{\rho_i}\equiv 1$. Then $\Phi(\rho_1)\oplus\dots\oplus\Phi(\rho_l)$ is an admissible homomorphism. The corresponding set of Jordan blocks is $\{\rho_1,\dots,\rho_l\}$. These Jordan blocks do not have gaps, and each character of the component group is cuspidal. Therefore, here one  gets  a packet consisting of $2^{l-1}$ cuspidal representations.

\item    Denote by $\psi_{un}$ the unramified character of $F^\t$ of order 2. Suppose that $2n+1$ is a sum of two squares. Then one of them must be even and the other odd. Denote them by $n_e$ and $n_o$, respectively. Then 
$$
\{(1_{F^\t},2i-1);i=1,2,\dots,n_o\}
\cup
\{(\psi_{un},2i-1);i=1,2,\dots,n_e\}
$$
are parameters of a packet of $Sp(2n,F)$. This packet contains two cuspidal representations. Further, the representation corresponding to the trivial character of the component group is supported on the minimal parabolic subgroup, and it is Iwahori-spherical. Thus, this is an Iwahori packet with antipodes.

From this, it follows that $Sp(2n,F)$ has an Iwahori packet with antipodes    if and only if $2n+1$ is a sum of two squares.

\item If a packet (of $Sp(2n,F)$) contains a representation supported on the minimal parabolic subgroup, then  all  the Jordan blocks of such a packet have the form $\d(\psi,2k-1$) for some characters $\psi$ of $F^\t$ satisfying $\psi^2\equiv 1$, and some $k\in  \Z_{\geq 1}$. 

The converse does not hold.  
For example, take any three characters $\chi_1,\chi_2,\chi_3$ of $F^\t$ or order two such that $\chi_1\chi_2\chi_3=1_{F^\t}$, and take any three odd positive integers $k_1,k_2,k_3$ satisfying $k_1+k_2+k_3=2n+1$. Then the packet of $Sp(2n,F)$ determined by Jordan blocks 
$\d(\chi_1,k_1),\d(\chi_2,k_2),\d(\chi_3,k_3)$ consists of 4 representations. All these representations are supported on  parabolic subgroups whose Levi factors are isomorphic to $GL(1,F)^{n-1}\t Sp(2,F)$.

Related to the above discussion, consider an irreducible cuspidal representation $\rho$ of $GSp(2,F)=GL(2,F)$ which splits into 4 pieces after restriction to $Sp(2,F)=SL(2,F)$. Then a packet of $Sp(2,F)$ consists of all irreducible pieces of the restriction $\rho|_{Sp(2,F)}$. The Jordan blocks are three non-trivial characters $\psi_1,\psi_2,\psi_3$, characterized by the condition $(\psi_i\circ\text{det})\rho\cong\rho$ (all this follows directly from \cite{W1} and Theorem \ref{th-re-cus}).
One can now consider the packet of $Sp(2n,F)$ determined by Jordan blocks $\d(\psi_1,k_1),\d(\psi_2,k_2),\d(\psi_3,k_3)$ and  easily describe its elements in terms of the elements of the packet of $Sp(2,F)$ considered above.

\end{enumerate}

Some questions  related to the packets  
arise naturally. One  question is to determine, for some  
 irreducible cuspidal representations obtained or constructed by other methods,
 the packets  to which they belong. 

From the other side, we have seen in the above examples that starting from  very simple Jordan blocks, we can have cuspidal representations in the packet. In general, such cuspidal representations will be degenerate. Also, they will be rare in the packet. The question is, can one describe (at least some  of) those representations in a different  way? 
 Clearly, these two questions are related. The Howe correspondences are a great source of  representations for the second question. Related to this, we  give some simple examples.

Let us first go to the packet in (1) of Example \ref{ex-symp} (which has very simple Jordan blocks). Using the Howe correspondence for $Sp(8,F)$ and $O(Y)$, where $Y$ is a totally anisotropic orthogonal space of dimension 4, C. M\oe glin has obtained from the signum character of $O(Y)$ an irreducible cuspidal representation $\s$ of $Sp(8,F)$. She has  also gotten  that $\text{Ind}_P^{Sp(10,F)}(|\ |^3_F\o \s)$ reduces (considering the Howe correspondence for $Sp(10,F)$ and $O(Y)$). Now (3) of Remark \ref{rm-cus-p} and Theorem \ref{th-Moe} imply directly that $\s$ is the cuspidal representation which is in the packet in (1) of Example \ref{ex-symp}.

Consider for a moment the packet of $SO(2k(k+1)+1,F)$ in  Example \ref{ex-ort}, (5), when one takes $\psi_1$ and $\psi_2$ unramified (then one of them is $1_{F^\t}$ and the other one is the unramified signum character; this is an Iwahori packet).
Clearly, it is natural to expect that the cuspidal unipotent $SO(2k(k+1)+1,F)$-representation of G. Lusztig  belongs  to this packet.

Now consider the packet in Example \ref{ex-symp}, (6), when one takes $2n+1$ to be the sum of two consecutive squares $k^2$ and $(k+1)^2$ (then $n$ is twice a triangular number). We get an Iwahori packet of $Sp(2k(k+1),F)$. Again, it is natural to expect that the cuspidal unipotent $Sp(2k(k+1),F)$-representation of G. Lusztig   belongs to this packet.

    Recall that the existence of  Iwahori packets  with antipodes for special odd-orthogonal (resp., symplectic) groups is related to the sums of triangular numbers (resp., sums of squares of integers). We have  discussed above only the case of two equal triangular numbers (resp., sum of two consecutive squares).
It is interesting to find other descriptions of the cuspidal representations in the remaining Iwahori packets. C. M\oe glin's construction, which we have discussed above,  gives a description of such a representation of $Sp(8,F)$  (where $9$ is the sum of $3^2$ and $0^2$).

   These are only some simple questions which arise related to the packets that we have considered.

\begin{remark}
Irreducible representations of compact Lie groups are classified by highest weights (\cite{We}). Other information about these representations may be  obtained from the corresponding highest weight (dimension, character, representation itself, etc.). Similarly, for the irreducible square integrable representations of classical $p$-adic groups, using their  parameters discussed at the beginning of  this section, it would be interesting to get 
  other relevant information about  representations, in particular, for the representations with simple parameters. For this, other descriptions of the representations might be useful.

\end{remark}
 
 We end this section with a simple result:
 
 \begin{proposition} The group $SO(2n+1,F)$  has a packet with antipodes if and only if $n$ is even.   
 
 Suppose that the residual characteristic  of $F$ is odd. Then  $Sp(2n,F)$ has a packet with antipodes if and only if $n$ is even\footnote{Odd residual characteristic is used only for the implication $\implies$.}. 
 
 \end{proposition}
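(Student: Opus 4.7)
My plan is to reduce both statements to explicit counting conditions on Jordan blocks, using Examples \ref{ex-ort}(4) and \ref{ex-symp}(7) to constrain their shape, then apply a parity argument combined with a classical decomposition theorem.

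\textbf{Odd orthogonal case.} I would first use Example \ref{ex-ort}(4) and Remark \ref{rm-cus-p}(1) to note that a packet of $SO(2n+1,F)$ containing both a minimal-parabolic representation and a cuspidal representation must have Jordan blocks
$$
\text{Jord}(\pi)=\bigsqcup_{i=1}^{r}\{\d(\psi_i, 2), \d(\psi_i, 4), \dots, \d(\psi_i, 2k_i)\}
$$
with distinct quadratic characters $\psi_i$ and $\sum_i k_i(k_i+1)/2=n$. Conditions (2) and (3) of Definition \ref{cus-ch} force any cuspidal character $\varphi$ to satisfy $\varphi(\d(\psi_i,2j))=(-1)^j$; for such $\varphi$ to descend to the $SO$-component group $2^{\text{Jord}}/\langle\prod_\sigma\sigma\rangle$ one needs $\prod_{i,j}(-1)^j=(-1)^n=1$, which forces $n$ to be even. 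For the converse I would invoke Gauss's three-triangular-number theorem to write an even $n$ as a sum of at most three triangular numbers, and realize this with three distinct quadratic characters (available since $|F^\times/(F^\times)^2|\geq 4$); the trivial character of the component group then produces a representation on the minimal parabolic via steps (2) and (4) of the recursive construction of Section \ref{examples}.

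\textbf{Symplectic case, odd residual characteristic.} By Example \ref{ex-symp}(7) combined with no gaps,
$$
\text{Jord}(\pi)=\bigsqcup_{i=1}^{r}\{\d(\psi_i, 1), \d(\psi_i, 3), \dots, \d(\psi_i, 2k_i-1)\}, \qquad \sum_i k_i^2=2n+1.
$$
The step I expect to be the main obstacle is to characterize when such a packet actually contains a representation on the minimal parabolic, since the converse of Example \ref{ex-symp}(7) fails in general. By the M\oe glin description (Theorem \ref{th-Moe-cusp}), this amounts to finding an element of the packet whose partial cuspidal support is the trivial representation of $Sp(0,F)$, which has Jordan block $\{1_{F^\times}\}$ (footnote to Example \ref{ex-int}). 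I plan to establish this by auditing the recursive construction of Section \ref{examples}: step (4) is inapplicable since no $\d(\rho,2)$ appears in Jord, step (3) preserves the underlying character, and step (2) only removes pairs within a single $\psi$-block, so reducing Jord to $\{1_{F^\times}\}$ is possible precisely when one $k_i$ is odd and attached to $\psi_i=1_{F^\times}$, with all other $k_j$ even.

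Granted this combinatorial criterion, both directions of the Sp statement follow quickly. For necessity, reducing modulo $8$ gives $k_{1_{F^\times}}^2\equiv 1$ and each even $k_j^2\equiv 0$ or $4$, so $2n+1$ is congruent to $1$ or $5$ modulo $8$, forcing $n$ even. For sufficiency, given $n$ even, Lagrange's four-square theorem gives $2n+1=a^2+b^2+c^2+d^2$; since $2n+1$ is odd, the number of odd summands is one or three, and the ``three odd'' case gives $2n+1\equiv 3$ or $7$ modulo $8$, ruled out by $n$ even. Thus exactly one summand is odd: I would assign it to $\psi_0=1_{F^\times}$ and distribute the (possibly zero) even summands among three distinct non-trivial quadratic characters of $F^\times$, which exist precisely because odd residual characteristic gives $|F^\times/(F^\times)^2|=4$. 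The resulting Jord admits a cuspidal character (the alternation relations from Definition \ref{cus-ch}(3) are linearly independent in the $Sp$-component group) and, by the pairing analysis above, contains the required minimal-parabolic representation.
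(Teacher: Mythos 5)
Your argument is correct, and it follows the paper's overall strategy (translate ``antipodes'' into combinatorial conditions on Jordan blocks, then solve them with classical square/triangular--number theorems), but several of your sub-arguments are genuinely different and in places tidier. For $SO(2n+1,F)$ the paper encodes the descent condition for the forced cuspidal character as $\sum_i\lfloor (k_i+1)/2\rfloor\in 2\Z$ and, for sufficiency, decomposes $2(n-1)$ into three terms $k_i(k_i+1)$ and appends $k_4=1$, using the already-proved necessity at $n-1$; you instead observe that the obstruction is exactly $(-1)^{\sum_i k_i(k_i+1)/2}=(-1)^n$, so any Gauss decomposition of $n$ into three triangular numbers works directly -- a real simplification. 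For $Sp(2n,F)$ the paper excludes the ``three odd blocks at the nontrivial quadratic characters'' configuration by running steps (2) and (3) of the recursive construction until a cuspidal character of some $Sp(2\ell,F)$, $\ell\ge1$, appears; your block-size parity invariant of the reduction is the same mechanism phrased as an invariant, and it cleanly yields the criterion ``$k$ odd exactly at $1_{F^\times}$''. Your sufficiency via Lagrange's four-square theorem plus a mod $4$ (or mod $8$) count of odd summands replaces the paper's reduction to $l=m_1(m_1+1)+m_2^2+m_3^2+m_4^2$ and its case split on $l=4^a(8b+7)$ with the three-square theorem; both are valid, yours is more direct.

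Two small points to make explicit. First, in the $SO$ converse you only exhibit the minimal-parabolic member; you should add one sentence that the cuspidal member exists because, by your own computation, the forced cuspidal character is trivial on $\prod_\sigma\sigma$ exactly when $n$ is even, so Theorem \ref{th-Moe-cusp} applies (the analogous point for $Sp$ you do address via linear independence of the alternation relations, which is fine). Second, the parenthetical that three distinct non-trivial quadratic characters exist ``precisely because'' the residual characteristic is odd is inaccurate: $|F^\times/(F^\times)^2|\ge 4$ for every non-archimedean $F$ considered here, so the backward implication needs no hypothesis on the residual characteristic -- consistent with the paper's footnote that odd residual characteristic is used only for the forward implication; under the stated hypotheses this slip is harmless.
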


\begin{proof}
First we  consider special odd orthogonal groups.

 Let $X$ be a packet with antipodes of $SO(2n+1,F)$. Since the packet contains a representation supported on the minimal parabolic subgroup, all elements of the Jordan blocks must be of the form $\d(\psi, k)$, where $\psi$ is a character of $F^\t$ satisfying $\psi^2\equiv 1$ and $k$ is even.  Let $\psi_i$, $i=1,\dots,m$, be all such different characters that show up in Jordan blocks, and let $k_i=\max\{l;\d(\psi_i,2l)\in X\}$.  Since we have a cuspidal representation  in the packet, there are no gaps. Therefore
\begin{equation}
\label{so-1}
X=\cup_{i=1}^m \{\d(\psi_i,2i);i=1,\dots,k_i\}
\end{equation}
and thus
\begin{equation}
\label{so-2}
\sum_{i=1}^m k_i(k_i+1)=2n.
\end{equation}
Observe that there can be only one cuspidal character, and it must be trivial on \eqref{so}. This implies
\begin{equation}
\label{so-3}
\sum_{i=1}^m \lfloor (k_i+1)/2\rfloor \in 2\Z,
\end{equation}
where, as before, $\lfloor x\rfloor $ denotes  the largest integer not exceeding
  $x$. Observe that
\begin{equation}
\label{so-4}
 \lfloor (k_i+1)/2\rfloor \in 1+2\Z \iff k_i\equiv 1,2 \  (\text{mod } 4)\iff  k_i(k_i+1) \in 2+4\Z.
\end{equation}

From the other side, if we have (different) $\psi_i$ satisfying $\psi_i^2\equiv 1$, and $k_i\in\Z_{\geq1}$  satisfies  \eqref{so-2} and  \eqref{so-3}, then if we define $X$  by \eqref{so-1}, we get a packet with antipodes.

From the above considerations, we see that in the case of packet with antipodes, \eqref{so-3}, \eqref{so-4} and  \eqref{so-2} imply that $n$ must be even.

Suppose now that $n$ is even. Then $n-1$ is odd. Recall that we have at least 4 different characters $\psi$ satisfying $\psi^2\equiv 1$. Denote them by $\psi_i,i=1,\dots,4$. A classical result of Gauss says that $n-1$ is a sum of three triangular numbers, i.e., $2(n-1)=\sum_{i=1}^3 k_i(k_i+1)$. Now since we have proved that $SO(2(n-1)+1,F)$ does not have packets with antipodes, \eqref{so-3} cannot hold. Thus  $\sum_{i=1}^3 \lfloor (k_i+1)/2\rfloor \in 1+2\Z$. Now  taking $k_4=1$, we get
$$
 \sum_{i=1}^4 k_i(k_i+1)=2n \text{ \ and \ } \sum_{i=1}^4 \lfloor (k_i+1)/2\rfloor \in 2\Z.
$$
The above discussion now implies that $SO(2n,F)$ has a packet with antipodes.

It remains to  consider the symplectic groups. Observe that we have now 4 characters $\psi$ satisfying $\psi^2\equiv 1$ (because the residual characteristic is odd).

 Let $X$ be a packet with antipodes of $Sp(2n,F)$. Since the packet contains a representation supported on the minimal parabolic subgroup, all elements of the Jordan blocks are of the form $\d(\psi, k)$, where $\psi$ is a character of $F^\t$ satisfying $\psi^2\equiv 1$ and $k$ is odd.  Let $\psi_i$,  $i=1,\dots,m$, be all such  characters that show up in the Jordan blocks, and let $k_i=\max\{l;\d(\psi_i,2l-1)\in X\}$. 
   Since we have a cuspidal representation in the packet, there are no gaps. Therefore
\begin{equation}
\label{sp-1}
X=\cup_{i=1}^m \{\d(\psi_i,2l-1);l=1,\dots,k_i\},
\end{equation}
which implies
\begin{equation}
\label{sp-2}
\sum_{i=1}^m k_i^2=2n+1.
\end{equation}
Now, condition $\eqref{sp}$ tells us that
 \begin{equation}
\label{sp-3}
\prod_{i=1}^m \psi_i^{k_i}\equiv1.
\end{equation}
This can happen exactly in two ways:
\begin{enumerate}
\item if $\psi_i\not\equiv 1$, then $k_i$ is even;
\item if $\psi_i\not\equiv 1$, then $k_i$ is odd, and all the three non-trivial characters show up as $\psi_i$'s.
\end{enumerate}

We  now show that (2) cannot happen. Suppose that  (2) holds. Let $\varphi$ be a character  of the component group which corresponds to a representation supported on the minimal parabolic subgroup. Now perform  the step (2) of the recursive construction as long as possible. We  come to a packet of a (possibly smaller) group, where  all  three non-trivial $\psi$ still show up. Now we  perform  step (3) as long as possible. We shall come to a packet of a (possibly smaller) group, where still all the three non-trivial $\psi$ will show up (which implies that it is a packet of some $Sp(2\ell,F)$ with $\ell\geq 1$), but without gaps. The character that we get in this way must be cuspidal. The  representation corresponding to this character is cuspidal. This implies that the representation corresponding to the initial character cannot be supported on the minimal parabolic subgroup. This finishes the proof of the claim. 

Therefore, (1) holds for the packets with antipodes. Because of this and \eqref{sp-2}, $1_{F^\t}$ must always show up in $X$. We  denote $1_{F^\t}$ by $\psi_1$. Then $k_1$ must be odd.

Now \eqref{sp-2} implies that $n$ must be even.

It remains to show that for each even $n$, we can find a packet with antipodes.
The above discussion implies that for this, it is enough to show that for each $l\in\Z_{\geq0}$ we can find $k_1\in1+2\Z_{\geq 0}$ and $k_2,k_3,k_4\in 2\Z_{\geq0}$ such that
$
4l+1=\sum_{i=1}^4 k_i^2.
$
To prove this, it is enough 
 to show that for each $l\in\Z_{\geq0}$ we can find   $m_1,\dots,m_4\in \Z_{\geq0}$ such that
 \begin{equation}
\label{sp-4}
l=m_1(m_1+1)+\sum_{i=2}^4 m_i^2.
\end{equation}
If $l$ is not of the form $4^a(8b+7)$, then a classical result of Gauss tells us that we can do this with $m_1=0$. If $l$ is of the form $4^a(8b+7)$, then we take $m_1=1$ and apply the  above classical result to $l-2$, which is now not of the form $4^a(8b+7)$. This  again gives the representation \eqref{sp-4}. This completes the proof of the proposition.
\end{proof}

\section{Appendix}
\setcounter{equation}{0}
\setcounter{footnote}{0}

A lemma that we prove in this appendix is essentially Lemma 3.2.1 of \cite{Jn-temp}. It covers a case not covered by  
Lemma 7.1 of \cite{T-temp}. The proof that we include here  uses methods (and notation) of the proof of  Lemma 7.1 of \cite{T-temp}. We need this simple result to know that the last step of the construction of elements in packets described in the previous section is compatible with \cite{Moe-T}. The claim is about partially defined functions. We do not recall their definition here (one can find it in  \cite{Moe-T}).

\begin{lemma}
\label{dodatak}
Let $\pi$ be an irreducible square integrable representation of a classical group.
Suppose $\d(\rho,2)\in \text{Jord}(\pi)$ and $\epsilon_\pi(\d(\rho,2))=1$. Then  there   exists an irreducible representation $\pi'$ of a classical 
group of the same series, such that 
\begin{equation}
\label{emb}
\pi \h \text{Ind}( |\text{det}|_F^{1/2} \rho\o\pi').
\end{equation}
 Further, any such $\pi'$ is square integrable and  $\text{Jord}(\pi')=\text{Jord}(\pi)\backslash \{\d(\rho,2)\}$.
The representation $\text{Ind}( |\text{det}|_F^{1/2} \rho\o\pi')$ has a unique irreducible subrepresentation and one gets  $\epsilon_{\pi'}$ from $\epsilon_{\pi}$ by restriction. 
\end{lemma}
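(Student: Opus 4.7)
The plan is to follow the Jacquet-module strategy of Lemma 7.1 of \cite{T-temp}, using the structure formula from \cite{T-Str} to analyze the appropriate Jacquet module of $\pi$, and then extract the required embedding and properties. Throughout, write $n = \dim \rho$, let $P$ be the maximal parabolic subgroup of $G$ whose Levi factor is $GL(n,F) \times G'$ (with $G'$ the classical group of rank $\text{rk}(G)-n$), and let $r_P$ denote the corresponding normalized Jacquet functor.

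First, I would establish existence of the embedding \eqref{emb}. By Frobenius reciprocity, it suffices to produce an irreducible subquotient of $r_P(\pi)$ of the form $|\text{det}|_F^{1/2}\rho \otimes \pi'$ for some irreducible $\pi'$. Here the hypotheses $\delta(\rho,2) \in \text{Jord}(\pi)$ and $\epsilon_\pi(\delta(\rho,2)) = 1$ enter decisively: by Mœglin's parameterization (\cite{Moe-Ex}, \cite{Moe-T}) the value $\epsilon_\pi(\delta(\rho,2)) = 1$ is precisely what guarantees that among the subquotients of $r_P(\pi)$ of the form $|\text{det}|_F^{x}\rho \otimes \tau$ (with $x \in \{\pm 1/2\}$), the positive exponent $x = +1/2$ is realized. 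One then sets $\pi'$ to be the second tensor factor.

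Second, I would show that any such $\pi'$ is square integrable with $\text{Jord}(\pi') = \text{Jord}(\pi) \setminus \{\delta(\rho,2)\}$. Square integrability follows from Casselman's criterion applied to $\pi$ and the geometric lemma: every exponent of $r_{P'}(\pi')$ (for a standard parabolic $P'$ of $G'$) is obtained from some exponent of the Jacquet module of $\pi$ by removing the contribution $+1/2$ coming from $|\text{det}|_F^{1/2}\rho$, and since $\pi$ is square integrable, the resulting inequalities still lie strictly in the negative Weyl chamber. For the Jordan block identity, one compares reducibilities of $\text{Ind}(\delta(\rho'',m) \otimes \pi)$ and $\text{Ind}(\delta(\rho'',m) \otimes \pi')$ via \eqref{emb}, using the standard Jacquet-module criteria of \cite{T-irr}; the disappearance of $\delta(\rho,2)$ from $\text{Jord}(\pi')$ is verified by showing that $\text{Ind}(\delta(\rho,2) \otimes \pi')$ is now reducible (it contains $\pi$ together with at least one other irreducible constituent detected on the Jacquet-module level).

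Third, uniqueness of the irreducible subrepresentation of $\text{Ind}(|\text{det}|_F^{1/2}\rho \otimes \pi')$ is obtained by showing that $|\text{det}|_F^{1/2}\rho \otimes \pi'$ appears with multiplicity one in $r_P$ applied to that induced representation; then any two subrepresentations would share the same embedded $\pi$, forcing them to coincide. Finally, the restriction property $\epsilon_{\pi'} = \epsilon_\pi|_{\text{Jord}(\pi')}$ follows from the compatibility of Mœglin's partially defined function with the embedding \eqref{emb}, since the admissible triple of $\pi'$ is read off from the Jacquet-module data of $\pi$ after removing the $|\text{det}|_F^{1/2}\rho$ layer.

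The main obstacle will be the first step: producing the $+1/2$ subquotient of $r_P(\pi)$ rather than (or in addition to) the $-1/2$ subquotient. This is where the sign condition $\epsilon_\pi(\delta(\rho,2)) = 1$ must be used in an essential way, and it is precisely the content of \cite{Moe-T} that isolates which sign governs which exponent. Once this is secured, the remaining verifications are bookkeeping with Jacquet modules via the structure of \cite{T-Str} and Casselman's criterion.
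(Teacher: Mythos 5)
Your overall framework (Frobenius reciprocity, the structure formula of \cite{T-Str}, multiplicity one of $|\text{det}|_F^{1/2}\rho\o\pi'$ in the Jacquet module of the induced representation) is the same as the paper's for the existence of the embedding and for the uniqueness of the irreducible subrepresentation. But note that what you single out as the main obstacle is in fact the easiest point: $\epsilon_\pi(\d(\rho,2))=1$ gives an embedding of type \eqref{emb} directly from the definition of the partially defined function, with no further work. The genuine gaps are elsewhere. First, your argument for square integrability of $\pi'$ does not work as stated: Casselman's criterion is about partial sums of exponents, and deleting the leading contribution of $|\text{det}|_F^{1/2}\rho$ makes every partial sum strictly smaller, so the strict inequalities satisfied by $\pi$ do not imply those required of $\pi'$ (the first exponent of $\pi'$ could well be negative while $1/2$ plus it is still positive). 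This is exactly why the paper quotes Remark 3.2 of \cite{Moe-Ex} for square integrability and Proposition 2.1(i) of \cite{Moe-T} for $\text{Jord}(\pi')=\text{Jord}(\pi)\backslash\{\d(\rho,2)\}$; some substitute for those results is needed, and a naive ``remove the $+1/2$ layer'' deduction is not one. Similarly, your multiplicity-one claim needs an input on the Jacquet module of $\pi'$ (the paper uses $\d(\rho,2)\not\in\text{Jord}(\pi')$ together with Lemma 3.6 of \cite{Moe-T}), which your sketch does not supply.

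Second, the assertion that $\epsilon_{\pi'}$ is the restriction of $\epsilon_\pi$ is where essentially all of the paper's effort goes, and your proposal only asserts it (``compatibility of M\oe glin's partially defined function with the embedding''). The partially defined function is not simply read off a Jacquet-module layer: one must check, for each type of element of $\text{Jord}(\pi')$, that the defining embedding/Jacquet-module conditions for $\epsilon_{\pi'}$ agree with those for $\epsilon_\pi$. The paper does this case by case, reusing parts (A), (C) and (F) of the proof of Lemma 8.1 of \cite{T-temp} when $\rho'\not\cong\rho$, and then giving a separate two-directional argument in the delicate case $\rho'\cong\rho$ with $c$ even: one direction by composing embeddings, the other by using $\epsilon_\pi(\d(\rho,b)\d(\rho,2))=1$ to place $\d([|\text{det}|_F^{-1/2}\rho,|\text{det}|_F^{(b-1)/2}\rho])\o\s$ in the Jacquet module of $\pi$ and transferring it to $\pi'$ via the structure formula of \cite{T-Str} and section 7 of \cite{T-temp}. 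Without an argument of this kind, the last (and main) assertion of the lemma is unproved in your proposal.
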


\begin{proof} The definition of $\epsilon_\pi(\d(\rho,2))=1$  implies that there   exists an embedding of type \eqref{emb}.
 Further, $\pi'$ is square integrable by Remark 3.2 of \cite{Moe-Ex}. Now  $\text{Jord}(\pi')=\text{Jord}(\pi)\backslash \{\d(\rho,2)\}$ by (i) in Proposition 2.1 of \cite{Moe-T}.

The fact that the representation $\text{Ind}( |\text{det}|_F^{1/2} \rho\o\pi')$ has a unique irreducible subrepresentation follows directly applying the structure formula of \cite{T-Str} (Theorems 5.4 and 6.4 there), using $\d(\rho,2)\not\in \text{Jord}(\pi')$ and Lemma 3.6 of \cite{Moe-T} (one shows that $ |\text{det}|_F^{1/2} \rho\o\pi'$ has multiplicity one in the Jacquet module of $\text{Ind}( |\text{det}|_F^{1/2} \rho\o\pi')$, and then apply Frobenius reciprocity).

The proof that one gets  $\epsilon_{\pi'}$ by restricting  $\epsilon_\pi$  can be more or less extracted from the proof of Lemma 8.1 of \cite{T-temp}. We   explain this below. 

Let $\d(\rho',c)\in \text{Jord}(\pi')$ and suppose that $c_-$ is defined. Then (C) of the proof of Lemma 8.1 of \cite{T-temp} proves also that $\epsilon_{\pi}(\d(\rho',c)\d(\rho',c_-)) = \epsilon_{\pi'}(\d(\rho',c)\d(\rho',c_-))$ (one needs to take $a=2$ there).

Let $\d(\rho',c)\in \text{Jord}(\pi')$. Suppose that $c$ is odd and that $\epsilon_{\pi'}(\d(\rho',c))$ is defined (then $\rho\not\cong\rho'$). Let $b$ be the maximal element among such $c$'s for this $\rho'$. Now (F) of the proof of Lemma 8.1 of \cite{T-temp} also proves  that $\epsilon_{\pi}(\d(\rho',b)) = \epsilon_{\pi'}(\d(\rho',b))$ (again, one takes  $a=2$ there).

Let $\d(\rho',c)\in \text{Jord}(\pi')$ and suppose that $c$ is even. Let $b$ be the minimal element among such $c$'s (for this $\rho'$). 

First, consider  the case $\rho\not\cong \rho'$.
Then (A) of the proof of Lemma 8.1 of \cite{T-temp} also proves  that $\epsilon_{\pi}(\d(\rho',b)) = \epsilon_{\pi'}(\d(\rho',b))$ (one takes  $a=2$ there).

It remains to consider the case $\rho\cong \rho'$. Suppose $\epsilon_{\pi'}(\d(\rho,b)) =1$. Then 
$$
\pi'\h \text{Ind}(\d([|\text{det}|_F^{1/2}\rho,|\text{det}|_F^{(b-1)/2}\rho])\o \tau)
$$
 for some $\tau$, which implies 
$$
\pi\h \text{Ind}(\d([|\text{det}|_F^{1/2}\rho,|\text{det}|_F^{(b-1)/2}\rho])\o|\text{det}|_F^{1/2}\rho\o \tau).
$$
This implies $\epsilon_{\pi}(\d(\rho,b)) =1$.

Suppose now $\epsilon_{\pi}(\d(\rho,b)) =1$. Then $\epsilon_{\pi}(\d(\rho,b)\d((\rho,2)) =1$. This implies that
$$
\pi\h \text{Ind}(\d([|\text{det}|_F^{-1/2}\rho,|\text{det}|_F^{(b-1)/2}\rho])\o \s)
$$
for some irreducible square integrable representation $\s$. This and \eqref{emb} imply that 
$$
\d([|\text{det}|_F^{-1/2}\rho,|\text{det}|_F^{(b-1)/2}\rho])\o \s
$$
is in the Jacquet module of $ \text{Ind}( |\text{det}|_F^{1/2} \rho\o\pi')$. Now, a simple analysis based on the structure obtained in \cite{T-Str} implies that a subquotient of the form $\d([|\text{det}|_F^{-1/2}\rho,|\text{det}|_F^{(b-1)/2}\rho])\o *$ is in the Jacquet module of $\pi'$. Using this, section 7 of \cite{T-temp} implies $\epsilon_{\pi'}(\d(\rho,b)) =1$. This completes the proof.
\end{proof}

\end{document}